\newtheorem{thm}{Theorem}[section]
\newtheorem{lem}{Lemma}[section]
\newtheorem{prop}{Proposition}[section]
\newtheorem{rem}{Remark}[section]
\newtheorem{maintheorem}{Theorem}
\date{}
\begin{document}
	\title{On the dynamics of Halley's method}
	
	\author[1]{Gang Liu \footnote{liugangmath@sina.cn}}
	\author[2]{Soumen Pal \footnote{soumen.pal.new@gmail.com}}
	\author[2]{Saminathan Ponnusamy  \footnote{samy@iitm.ac.in}}
	\affil[1]{College of Mathematics and Statistics,
		Hengyang Normal University, China.}
	\affil[2]{Department of Mathematics,
		Indian Institute of Technology Madras, India}
	\date{}
	\maketitle
\begin{abstract}
In this article, we study the global dynamics of Halley's method applied to complex polynomials. Specifically, we analyze the structure and connectivity of the Julia set of this method. The convergence behavior, symmetry properties, and topological features of the corresponding Fatou and Julia sets are studied for various classes of polynomials, including unicritical, cubic, and quartic polynomials with non-trivial symmetry groups. In particular, we prove that Halley's method $H_p$ is convergent, its Julia set is connected, the immediate basins are unbounded, and the symmetry group of it coincides with that of the polynomial whenever $p$ belongs to one of the above classes. We further extend our results to a broader class of polynomials. It is shown that the immediate basin of Halley's method $H_p$ corresponding to a root of $p$ can be bounded. We also make some remarks on the dynamics of Halley's method applied to a cubic polynomial in general.		
\end{abstract}
	\textit{Keyword:} Fatou and Julia sets; Halley's method; Convergent classes; Symmetry in Julia set.\\
	AMS Subject Classification: 37F10, 65H05
\section{Introduction}
Complex dynamics is the study of iteration of functions in the extended complex plane $ \widehat{\mathbb{C}}=\mathbb{C}\cup \{\infty\} $, mainly dealing with rational or entire functions. For a given rational function $ R: \widehat{\mathbb{C}} \to \widehat{\mathbb{C}} $, the dynamical behavior of the sequence \( \{R^n(z)\} \) as \( n \to \infty \) can exhibit intricate and rich structures, where $R^n$ denotes the $n$-fold composition of $R$ with itself. The \textit{Fatou set}  $ \mathcal{F}(R) $ consists of points in \( \widehat{\mathbb{C}} \) where the family \( \{R^n\} \) is normal in some neighborhood. Its complement, the \textit{Julia set} $ \mathcal{J}(R) = \widehat{\mathbb{C}} \setminus \mathcal{F}(R) $, represents the chaotic and highly sensitive dynamics of $ R $. These sets form the core objects of study in complex dynamics. By definition, the Fatou set is open, and a component of it is called a Fatou component. The Julia set is therefore closed. However, the Julia set is not always connected, and a connected subset of it known as a Julia component.
A fundamental aspect in characterizing these two sets is the classification of the fixed points of a rational function $R$. For a fixed point $z_0\in \widehat{\mathbb{C}}$ (i.e., $R(z_0)=z_0$), its multiplier $\lambda_{z_0}$ (or simply $\lambda$ whenever the suffix is not required) is defined by the complex number $R'(z_0)$ whenever $z_0\in \mathbb{C}$, or, whenever $R$ fixes $\infty$, its multiplier is $G'(0)$, where $G(z)=\frac{1}{R(1/z)}$. The fixed point $z_0$ is said to be \textit{attracting} if $|\lambda_{z_0}|<1$ (in a special case when $\lambda_{z_0}=0,$ $z_0$ is said to be \textit{superattracting}), repelling if $|\lambda_{z_0}|>1$, and \textit{indifferent} if $|\lambda_{z_0}|=1$. An indifferent fixed point is said to be \textit{parabolic} or \textit{rationally indifferent} if its multiplier is a root of unity, else, it is said to be \textit{irrationally indifferent}. A point $z^*$ is said to be a \textit{periodic point} with period $k$ if $R^k(z^*)=z^*$. However, the equality does not hold for $R^i$, for any $i=1,2,\dots, k-1$. A $k$-periodic point is classified as attracting, repelling, or indifferent by treating it as a fixed point of $R^k$.
Note that, an attracting fixed point $z_0$ of $R$ is contained in $\mathcal{F}(R)$, and there is a set $\mathcal{B}_{z_0}$ containing points whose successive iterations converge to $z_0$, i.e., $\mathcal{B}_{z_0}=\{z\in \widehat{\mathbb{C}}: \lim\limits_{n\to \infty}R^n(z)=z_0\}$. This set is called the \textit{basin of attraction} corresponding to $z_0$. The component of the basin that contains $z_0$ is known as the \textit{immediate basin}, and we denote it as $\mathcal{A}_{z_0}$. For a rational function $R$, a Fatou component $U$ can be of four types:
\begin{itemize}
	\item attracting domain if $U$ contains an attracting fixed point.
	\item parabolic domain if $\partial U$ contains a parabolic fixed point $\xi$, and $\lim\limits_{n \to \infty}R^n(z)=\xi$ for every $z\in U$.
	\item Siegel disk if there is an analytic homeomorphism $\phi: U \to \mathbb{D}$ such that $\phi \circ R \circ \phi^{-1}(z)=e^{2\pi i \theta}z$, for some irrational number $\theta$. Here $\mathbb{D}=\{z : |z| <	1\}$.
	\item Herman ring if there is an analytic homeomorphism $\phi: U \to \mathbb{A}_r=\{z : 1<|z| <r \}$ such that $\phi \circ R \circ \phi^{-1}(z)=e^{2\pi i \theta}z$, for some irrational number $\theta$.
\end{itemize}
A Fatou component is said to be a \textit{rotation domain} if it is either a Siegel disk or a Herman ring. Note that, for a complex function, there is another type of Fatou component, known as wandering domain. However, a rational function does not have a wandering domain. This is a remarkable result by Sullivan (see \cite[Theorem~8.1.2]{Beardon_book}). For a detailed study on complex dynamics, we refer to \cite{Beardon_book,CG1993,Milnor_book}.
%

A root-finding method is defined as a map $ F $ that assigns to each polynomial $ p $ a rational function $ F_p $ such that every root of $ p $ is an attracting fixed point of $ F_p $.
 Iterative root-finding methods, such as Newton's and Halley's methods, not only provide practical schemes for locating roots of nonlinear equations but also give rise to interesting dynamical systems when studied in the complex plane. For a given point $z_0\in \widehat{\mathbb{C}}$ (often called an \textit{initial guess}), the behavior of its orbits, i.e., the sequence $\{F_p^i(z_0)\}_{i\in \mathbb{Z}}$, can be captured through the study of its Julia set $ \mathcal{J}(F_p)$ and Fatou set $ \mathcal{F}(F_p) $. This viewpoint connects numerical analysis with the field of complex dynamics (see \cite{ABP2004,BH2003,McMullen1987} and references therein). The \textit{order of convergence} of $F$ is $n$ (we say $F$ is \textit{$n$-th order convergent}) if the local degree of $F_p$ is at least $n$ at each simple root of $p$. Note that this notion describes the local rate of convergence near a root. For an example, Newton's method is quadratic convergent. Given a polynomial $p$, $F_p$ is said to be \textit{convergent} if the Fatou set $\mathcal{F}(F_p)$ is precisely the union of the basins of attraction of the roots of $p$, i.e., any point in the Fatou set will eventually converge to a root of $p$. This notion of convergence concerns the global behavior of the method in the complex plane. For a fixed natural number $d\geq 2$, a root-finding method $F$ is said to be \textit{generally convergent} if $F_p$ is convergent for all $p$ belonging to a dense subset of the family of all polynomials of degree $d$. Newton's method is generally convergent for $d=2$. McMullen proved that there is no root-finding method which is generally convergent for $d\geq 4$ (see Theorem 1.1, \cite{McMullen1987}).

The dynamics of Newton’s method has been extensively studied and is now classical. Its rich literature includes \cite{HSS2001,Lei1997,Felex1989} and references therein. However, comparatively less is known about the global dynamics induced by higher-order methods such as Halley’s method. This may be attributed to the increased degree of the resulting rational map when a higher-order convergent root-finding method is applied, which makes the associated dynamical study more challenging. Given a complex polynomial $ p $, Halley's method is defined by the rational map
\begin{equation}\label{H_formula}
H_p(z) = z - \frac{2p(z)p'(z)}{2(p'(z))^2 - p(z)p''(z)}.
\end{equation}
This method is cubically convergent, i.e., the local degree of $H_p$ at every simple root of $p$ is at least three. Halley's method is the second member of the well-known family of root-finding methods, namely, K\"{o}nig's methods, which is derived by the following formula
$$K_{p,n}(z)=z+(n-1)\frac{\left(\frac{1}{p}\right)^{[n-2]}(z)}{\left(\frac{1}{p}\right)^{[n-1]}(z)},$$
where $n\geq 2$ is the order of convergence.
This also belongs to the Chebyshev-Halley family, which is derived by
$$
G^{\sigma}_p(z)=z-\left[1+\frac{1}{2}\dfrac{p(z)p''(z)}{(p'(z))^2-\sigma p(z)p''(z)}\right]\frac{p(z)}{p'(z)}.
$$
Note that each member of this family is a third-order convergent root-finding method, and $\sigma=\frac{1}{2}$ is Halley's method.
\par
In this paper, we extensively study the rational map $ H_p $  across various classes of polynomials $ p $ . Our goal is to analyze both the qualitative and quantitative characteristics of the Julia set $ \mathcal{J}(H_p) $ and the Fatou set $ \mathcal{F}(H_p) $. This research is motivated by our interest in exploring how iterative root-finding methods preserve the algebraic and the geometric properties of the original polynomials. For example, we investigate the existence of holomorphic Euclidean isometries that preserve the Julia sets of both $ p $ and $ H_p $. The collection of such maps, known as the \textit{symmetry group}, is discussed in detail in Section~\ref{Sect_sym} of this article. Note that this symmetry group is typically finite, exceptions happen whenever the Julia set is either the entire complex plane, a circle, or a line. However, for a polynomial $ p $ with at least two distinct roots, the Julia set of $ H_p $ cannot be $ \widehat{\mathbb{C}} $ or a circle, as the roots of $ p $ are attracting fixed points of  $ H_p $, while $ \infty $ is a repelling fixed point (see Proposition~\ref{prop_f.pts}). We establish that $ \mathcal{J}(H_p) $ is a line if and only if the polynomial $ p $ has precisely two distinct roots, each with equal multiplicity.
This result is presented in Theorem~\ref{exact2}.
\par
A significant portion of the paper is devoted to examining specific families of polynomials, including unicritical polynomials and certain cubic and quartic polynomials with special symmetries. For these classes, we show that the corresponding Halley’s method is convergent, the Julia set is connected, the immediate basins of attraction corresponding to the roots are unbounded, and the symmetry groups are identical (see Theorem~\ref{m1}). Note that, proving the Julia set connected is important in the context that unlike Newton's method, it is not straightforward to conclude about the connectivity of $\mathcal{J}(H_p)$ for every $p$. Moreover, in \cite{Honorato2013}, it is theoretically proved that the Julia set of K\"{o}nig's methods of order $n\geq 3$ is not always connected. Following the similar technique, Cumsille et al. proved the existence of a disconnected Julia set of Halley's method applied to the function $pe^q$, where $p$ and $q$ are polynomials (see \cite[Theorem 1.1]{CMHD2022}). Although, in both the cases, there is no explicit example of a function whose K\"{o}nig's method of order $n\geq 3$ has a disconnected Julia set.
\par
In Theorem~\ref{extn_gen}, we further extend our analysis to polynomials of the form $ p(z) = z(z^n + \lambda) $, establishing results on connectivity, the absence of rotation domains, and the preservation of symmetry. For this class of polynomials, we prove that an immediate basin of Halley's method can be bounded (see Theorem~\ref{extn_gen}(4)). Such property is totally different from Newton's method for polynomials. In fact, for Newton's method $N_p$ applied to a polynomial $p$, each of the immediate basins corresponding to the roots of $p$ is unbounded. Based on this and the connectedness of the Julia set when Newton's method applied to polynomials, Hubbard et al. \cite{HSS2001} found that there are  $ n $ accesses to $\infty$ in corresponding immediate basin of root, where $ n $ is the number of critical points in such immediate basin of root (counting with multiplicity). Thus, they can construct a finite set of points such that, for every root of every such polynomial, at least one of these points will converge to this root under Newton's map. These points are distributed in some circles with the same center (the origin). Recall that there exists bounded immediate basin of roots when Halley's method applied to polynomials. Therefore, it is very difficult to extend their method to the case of Halley's method, although the convergent speed of this method is faster. Recall that Newton's method and Halley's method are K\"{o}nig's methods of order $ 2 $ and $ 3 $, respectively. Identifying a polynomial for which an immediate basin of a root is bounded under K\"{o}nig's method of order $n>3$
seems to be a nontrivial task.
\par
We propose a parameterization of cubic polynomials which helps to prove the existence of $2$-periodic superattracting cycles (see Proposition~\ref{exst_2p}) as well as non-existence of a rotation domain under some hypothesis (see Proposition~\ref{no_rot_dom}).
\par
The results presented here generalize known facts about Newton’s method and provide a broader framework for analyzing rational maps derived from the iterative root-finding techniques. Our approach combines tools from complex analysis and topological dynamics.
\par
The structure of the paper is as follows: In Section~\ref{Sect-2}, we introduce Halley’s method and analyze its fixed points and properties. Section~\ref{Sect-3} studies the dynamics of $ H_p $ for various polynomial classes. Section~\ref{Sect-4} presents an extension to polynomials of higher degree, while Section~\ref{Sect-5} concludes with some results on Halley's method applied to cubic polynomials.
\section{Halley's method: Properties and discussion}\label{Sect-2}
Recall that Equation~(\ref{H_formula}) gives Halley's method $H_p$ applied to a polynomial $p$. Whenever $p$ is a polynomial with exactly one root, say $p(z)=(z-a)^d$, the corresponding Halley's method is $H_p(z)=\frac{(d-1)z+2a}{d+1}$, which is a linear map. Thus, we consider polynomials having at least two distinct roots. Apart from the roots of $p$, there are fixed points of $H_p$, which are called \textit{extraneous}. The classification of fixed points of Halley's method is the following.
\begin{prop}\label{prop_f.pts}
A finite fixed point of $H_{p}$ is either a root of $p$ or a critical point of $p$ which is not a root of $p$.
More precisely,
\begin{enumerate}
\item a root $\alpha$ of $p$ with multiplicity $k$ is an attracting fixed point of $H_{p}$ with multiplier $\frac{k-1}{k+1}$;
\item if $\beta$ is a critical point of $p$ with multiplicity $l$ such that $p(\beta) \neq 0$, then $\beta$ is a repelling fixed point of $ H_p $ with multiplier $1+\frac{2}{l}$;
\item the point at $\infty$ is a repelling fixed point of $H_p$ with multiplier $\frac{d+1}{d-1}$.
\end{enumerate}
\end{prop}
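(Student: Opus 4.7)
The plan is to first classify the finite fixed points by solving $H_p(z)=z$, which is equivalent to $\frac{2p(z)p'(z)}{2(p'(z))^2-p(z)p''(z)}=0$. Since this rational correction term vanishes exactly when its numerator $p(z)p'(z)$ is zero and its denominator is nonzero (or at least dominates the numerator), every finite fixed point is either a zero of $p$ or a critical point of $p$ that is not a zero of $p$. I therefore split into the three cases in the statement and compute the multiplier directly by a local expansion.

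For (1), write $p(z)=(z-\alpha)^k q(z)$ with $q(\alpha)\neq 0$ and set $u(z)=p(z)/p'(z)$. A standard Taylor expansion gives $u(z)=\frac{z-\alpha}{k}+O((z-\alpha)^2)$ and $\frac{p(z)p''(z)}{(p'(z))^2}=\frac{k-1}{k}+O(z-\alpha)$. Rewriting $H_p(z)=z-\frac{2u(z)}{2-\frac{p(z)p''(z)}{(p'(z))^2}}$ and substituting these expansions yields $H_p(z)-\alpha=\frac{k-1}{k+1}(z-\alpha)+O((z-\alpha)^2)$, which gives both the attractivity (since $0\le\frac{k-1}{k+1}<1$) and the stated multiplier.

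For (2), write $p'(z)=(z-\beta)^l r(z)$ with $r(\beta)\neq 0$, and set $w=z-\beta$. Then
\begin{equation*}
2p(z)p'(z)=2p(\beta)r(\beta)\,w^l+O(w^{l+1}),\qquad 2(p'(z))^2-p(z)p''(z)=-l\,p(\beta)r(\beta)\,w^{l-1}+O(w^l),
\end{equation*}
because $(p'(z))^2$ vanishes to order $2l$, which is strictly greater than $l-1$. Dividing these and substituting into $H_p(z)=z-\frac{2p(z)p'(z)}{2(p'(z))^2-p(z)p''(z)}$ produces $H_p(z)-\beta=(1+\frac{2}{l})(z-\beta)+O((z-\beta)^2)$, so $\beta$ is fixed with multiplier $1+\frac{2}{l}>1$. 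For (3), I pass to the chart $G(w)=1/H_p(1/w)$ at infinity: using $p(z)\sim a_d z^d$, $p'(z)\sim d a_d z^{d-1}$, $p''(z)\sim d(d-1)a_d z^{d-2}$, the leading behavior of $H_p(z)$ at infinity is $\frac{d-1}{d+1}z+O(1)$; inverting gives $G(w)=\frac{d+1}{d-1}w+O(w^2)$ near $0$, and hence $G'(0)=\frac{d+1}{d-1}>1$.

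The main obstacle I anticipate is the bookkeeping in part (2) when $l\ge 2$: in that regime the denominator $2(p')^2-p\,p''$ actually vanishes at $\beta$, so one has to verify that $H_p$ is nevertheless holomorphic at $\beta$ with $H_p(\beta)=\beta$, and that this is not a pole or removable ambiguity. Tracking the precise orders of vanishing of $(p')^2$, $p\,p''$, and $p\,p'$ simultaneously is what makes the verification clean; once the dominant monomial in each piece is isolated, the cancellation giving the multiplier $1+\frac{2}{l}$ is immediate.
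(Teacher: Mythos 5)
Your computations are correct: the expansions at a root of multiplicity $k$, at a special critical point of multiplicity $l$, and in the chart $w\mapsto 1/H_p(1/w)$ at $\infty$ all give the stated multipliers $\frac{k-1}{k+1}$, $1+\frac{2}{l}$ and $\frac{d+1}{d-1}$. Note, however, that the paper does not prove the proposition this way (or at all): it simply observes that the statement follows directly from Proposition 1 of Buff and Henriksen on K\"onig's root-finding algorithms, Halley's method being $K_{p,3}$. So your route is genuinely different: a self-contained local-expansion argument specialized to the explicit Halley formula, writing $p(z)=(z-\alpha)^k q(z)$ at a root and $p'(z)=(z-\beta)^l r(z)$ at a free critical point, isolating the dominant orders of $2pp'$ and $2(p')^2-pp''$, and reading off the linear coefficient. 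What your approach buys is transparency and independence from the literature; in particular it makes explicit that $H_p$ is holomorphic and fixes the point even where the displayed denominator vanishes (multiple roots, and special critical points with $l\ge 2$), because the numerator vanishes to strictly higher order. What the citation buys is brevity and generality, since the quoted result treats all orders of the K\"onig family at once. Two small wording points: your phrase that the denominator ``at least dominates the numerator'' has the roles reversed --- at a common zero the correction term vanishes precisely because the numerator $2pp'$ vanishes to strictly higher order than the denominator, which is exactly what your order counts in cases (1) and (2) establish; and it is worth saying explicitly that a critical point which is also a zero of $p$ is just a multiple root, hence covered by case (1), so the trichotomy in the statement is exhaustive (and for (3) one uses $d\ge 2$, guaranteed since $p$ is assumed to have at least two distinct roots).
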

The proof directly follows from Proposition 1 of \cite{BH2003}. We can observe that all extraneous fixed points of Halley's method are repelling.
A multiple critical point of a polynomial is called special if it is not a root of $p$. Such critical points, along with the distinct roots of $p$, determine the degree of $H_p$.
\begin{lem}
Suppose that a polynomial $p$ has $N$ many distinct roots and $s$ many special critical points with cumulative multiplicity $B$. Then $\deg(H_p)=2N+s-B-1$.
\end{lem}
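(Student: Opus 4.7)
The plan is to compute $\deg(H_p)$ by writing $H_p$ as a reduced fraction of polynomials. Set $D(z)=2(p'(z))^2-p(z)p''(z)$ and $N(z)=zD(z)-2p(z)p'(z)$, so that $H_p=N/D$. A one-line leading-coefficient check (the top-degree terms of $2(p')^2$ and $pp''$ combine to $d(d+1)a_d^2\neq 0$, where $d=\deg p$ and $a_d$ is the leading coefficient of $p$) yields $\deg D=2d-2$ and $\deg N=2d-1$, so $\deg(H_p)=2d-1-\deg\gcd(N,D)$ and the problem reduces to computing this gcd.

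The key simplification is $\gcd(N,D)=\gcd(zD-2pp',D)=\gcd(pp',D)$, so I only need the local orders of vanishing of $D$ at the zeros of $pp'$, which are precisely the roots and critical points of $p$, mirroring the two kinds of finite fixed points of $H_p$ from Proposition~\ref{prop_f.pts}. At a root $\alpha$ of $p$ of multiplicity $k$, local expansion shows both $(p')^2$ and $pp''$ vanish to order $2k-2$ there, and their leading coefficients combine (up to a common nonzero factor) to $k(k+1)\neq 0$, giving $v_\alpha(D)=2k-2$; together with $v_\alpha(pp')=2k-1$, this yields $v_\alpha(\gcd(N,D))=2k-2$. At a critical point $\beta$ of $p$ with $p(\beta)\neq 0$ where $p'$ has order $l$, one has $v_\beta((p')^2)=2l$ and $v_\beta(pp'')=l-1$, so $v_\beta(D)=l-1$ and $v_\beta(\gcd(N,D))=l-1$. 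Both local contributions vanish for simple roots and simple critical points, respectively, so the only terms that matter are multiple roots of $p$ and special critical points.

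Summing, the distinct roots contribute $\sum_\alpha(2k_\alpha-2)=2d-2N$ (using $\sum_\alpha k_\alpha=d$), and the special critical points contribute $\sum_\beta(l_\beta-1)=B-s$; simple non-root critical points contribute zero, so whether they are included in $s$ is immaterial. Hence
$$\deg(H_p)=(2d-1)-\bigl((2d-2N)+(B-s)\bigr)=2N+s-B-1.$$
The only nontrivial step is verifying that the leading terms of $2(p')^2$ and $pp''$ do not cancel at a root of $p$, which reduces to the arithmetic identity $k(k+1)\neq 0$; this is the single place where an unforeseen cancellation could have destroyed the formula.
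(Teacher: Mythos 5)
Your proof is correct, but it follows a genuinely different route from the paper. You compute $\deg(H_p)$ directly from the defining formula: writing $H_p=N/D$ with $D=2(p')^2-pp''$, you check the leading coefficients ($d(d+1)a_d^2$ and $d(d-1)a_d^2$, both nonzero for $d\geq 2$) to get $\deg D=2d-2$, $\deg N=2d-1$, reduce $\gcd(N,D)$ to $\gcd(pp',D)$, and then evaluate the local orders of cancellation: $2k-2$ at a root of multiplicity $k$ and $l-1$ at a non-root critical point of order $l$, summing to $(2d-2N)+(B-s)$. The paper instead counts fixed points: by its Proposition~2.1 every fixed point of $H_p$ is simple (all multipliers differ from $1$), the finite ones are exactly the $N$ roots, the $r$ simple critical points and the $s$ special critical points, and since a rational map of degree $D$ has $D+1$ fixed points with multiplicity, $\deg(H_p)=N+r+s$ with $r=N-B-1$ extracted from $\deg p'=d-1$. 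Your argument is more elementary and self-contained — it needs no fixed-point classification and, as a bonus, exhibits exactly which factors cancel in the fraction defining $H_p$ (e.g.\ the factor of order $2k-2$ at a multiple root), while the paper's argument is shorter once Proposition~2.1 is available and avoids all coefficient computations. One small imprecision, which does not affect the result: at a simple root ($k=1$) the term $pp''$ vanishes to order at least $1$, not to order $2k-2=0$; but there $2(p')^2$ is nonzero at $\alpha$, so $v_\alpha(D)=0$ and the contribution is zero as you claim.
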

\begin{proof}
Let $p$ have $m$ simple roots and $n$ multiple roots whose cumulative multiplicity is $A$. Therefore, $N=m+n$ and $d=m+A$, where $\deg(p)=d$. Let the number of simple critical points be $r$. As a multiple root of $p$ is also a critical point with multiplicity one less than the multiplicity of that point as a root of $p$, we have $r+B+(A-n)=d-1$. This gives that $r=N-B-1$. From Proposition~\ref{prop_f.pts} we know that all the fixed points of $H_p$ are simple, and apart from $\infty$, these are the roots of $p$ as well as the simple and special critical points of $p$. As the degree of a rational function is one less than the total number of fixed points (counting with multiplicity), we have $\deg(H_p)=N+r+s=2N+s-B-1$.
\end{proof}
\begin{rem}
If $p$ does not have any special critical point then $\deg(H_p)=2N-1$. If $p$ is unicritical then $\deg(H_p)=d+1$.
\end{rem}
\begin{prop}\label{ext_two}
Let $H$ be a rational map having exactly two repelling fixed points with equal multiplier. Then $H$ is conjugate to Halley's method applied to a polynomial $p$ if and only if $p$ is unicritical.
\end{prop}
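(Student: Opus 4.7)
The plan is to read off the repelling fixed points of $H_p$ directly from Proposition~\ref{prop_f.pts} (they are $\infty$ together with the special critical points of $p$) and then use the equal-multiplier hypothesis to force $p'$ to have all its zeros at one point. Since fixed-point counts, their repelling character, and their multipliers are all invariants of Möbius conjugation, the hypothesis on $H$ transfers verbatim to $H_p$, and I will carry out the whole argument on the side of $H_p$.

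For the ($\Leftarrow$) direction, I would first use the fact that Halley's method is equivariant under affine change of variable to reduce any unicritical $p$ to $p(z)=z^d+\lambda$ with $\lambda\neq 0$ (the case $\lambda=0$ is excluded by the standing assumption that $p$ has at least two distinct roots). For such $p$, the unique critical point is $\beta=0$ with multiplicity $l=d-1$ as a zero of $p'$ and $p(0)=\lambda\neq 0$, so Proposition~\ref{prop_f.pts} yields a repelling fixed point of $H_p$ at $0$ with multiplier $1+\tfrac{2}{d-1}=\tfrac{d+1}{d-1}$, which matches the multiplier of $H_p$ at $\infty$. Since the roots of $p$ are attracting and account for all remaining finite fixed points, $H_p$ has exactly two repelling fixed points with equal multiplier, and the same holds for every conjugate of $H_p$.

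For the ($\Rightarrow$) direction, Proposition~\ref{prop_f.pts} tells me that the two repelling fixed points of $H_p$ must be $\infty$ together with a single special critical point $\beta$ of $p$, of some multiplicity $l\ge 1$. Equating multipliers gives $1+\tfrac{2}{l}=\tfrac{d+1}{d-1}$, hence $l=d-1$. Since $\deg p'=d-1$ and $\beta$ alone absorbs all $d-1$ zeros of $p'$, one obtains $p'(z)=c(z-\beta)^{d-1}$. Consequently $\beta$ is the unique critical point of $p$; in particular $p$ has no multiple roots (such a root would have produced an additional zero of $p'$), so $p$ is unicritical.

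I do not anticipate a serious obstacle: once Proposition~\ref{prop_f.pts} is invoked, the argument is essentially algebraic bookkeeping. The only step that requires genuine care is the affine normalization in the ($\Leftarrow$) direction, which rests on the well-known equivariance of Halley's method under affine conjugation and on recognizing that every unicritical polynomial is affinely equivalent to some $z^d+\lambda$.
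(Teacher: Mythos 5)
Your proof is correct and follows essentially the same route as the paper: both directions are read off from Proposition~\ref{prop_f.pts}, with the multiplier equation $1+\tfrac{2}{l}=\tfrac{d+1}{d-1}$ forcing $l=d-1$ and hence unicriticality, and the converse being a direct multiplier check. Your write-up is if anything slightly more explicit than the paper's (the degree count for $p'$ and the affine normalization $z^d+\lambda$ in the converse), but the underlying argument is the same.
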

\begin{proof}
First we consider that $H$ is conjugate to $H_p$, for some polynomial $p$ of degree $d$. Then, $p$ has exactly one critical point, say $z_0$, which is either a simple or a special critical point. By Proposition~\ref{prop_f.pts}, the multiplier of $z_0$ is $1+\frac{2}{l}$ for some natural number $l$. Therefore, by the hypothesis, we get $1+\frac{2}{l}=\frac{d+1}{d-1}$, which gives $l=d-1$. Thus, $p$ has a critical point with multiplicity $d-1$, and hence $p$ is unicritical.
\par
The converse part follows directly from Proposition~\ref{prop_f.pts}.
\end{proof}
\begin{rem}
The hypothesis in Proposition~\ref{ext_two} of having equal multiplier is necessary. For an example, if $p(z)=z^n(z-1)$, $n\geq 2$, then $\frac{n}{n+1}$ is the only finite extraneous fixed point of $H_p$, and its multiplier is $3$, which is not same as the multiplier of $\infty$.
\end{rem}
Note that for a polynomial $p$, if we construct another polynomial $q$ which is of the form $q(z)=c p(T(z))$, where $c$ is a non-zero constant and $T$ is a non-constant affine map, then $H_q(z)=T^{-1}(H_p(T(z)))$. This property is called Scaling property and the proof follows from
\cite[Theorem 2.2]{Nayak-Pal2022}.
\subsection{Maps preserving $\mathcal{J}(H_p)$}\label{Sect_sym}
A holomorphic Euclidean isometry is of the form $z\mapsto Az+B$, where $|A|=1$. For a rational function $R$, there may be such an isometry $\sigma$ which preserves the Julia set of $R$, i.e., $\sigma(\mathcal{J}(R))=\mathcal{J}(R)$. This type of maps plays a crucial role to investigate the dynamics of $R$. The collection of all holomorphic Euclidean isometries which preserve $\mathcal{J}(R)$ is called symmetry group, and is denoted by $\Sigma R$. In case of a polynomial, this group is well classified by Beardon (see \cite[Section 9.5]{Beardon_book}). A monic polynomial whose second leading coefficient is zero, is known as a normalized polynomial, and it can be represented as $z^\alpha p_0(z^\beta)$, where $p_0$ is a monic polynomial and $\alpha, \beta$ are non-negative integers, maximal for this expression. Then its symmetry group is $\{z\mapsto \lambda z: \lambda^\beta=1\}$ (see \cite[Theorem 9.5.4]{Beardon_book}). The generalization of maps which preserve the Julia set of a rational function can be found in \cite{Ferreira2023,NP2025}. The comparison between the symmetry groups of a polynomial and that of a root-finding method was initiated by Yang, who proved that if $p$ is a normalized polynomial which is not a monomial, then $\Sigma p\subseteq \Sigma N_p$, where $N_p$ is Newton's method applied to $p$ (see \cite[Theorem 1.1]{Yang2010}). His results were generalized by Liu and Gao \cite{Liu-Gao2015} by considering K\"{o}nig's methods. Further generalization was introduced by Nayak and Pal, by proving that if a root-finding method $F$ satisfies the Scaling property then for a non-monomial normalized polynomial $p$, $\Sigma p \subseteq \Sigma F_p$ (see \cite[Theorem 1.1]{Sym_dyn}). Since Halley's method belongs to K\"{o}nig's family, as well as it satisfies the Scaling property, for a normalized polynomial $p$ having at least two distinct roots, we have $\Sigma p\subseteq \Sigma H_p.$ We establish the equality in this relation for certain classes of polynomials. To do that we use the following result, which is Theorem C in \cite{NP2025}.
\begin{thm}[\cite{NP2025}] \label{CAOT}
Let $R$ be a rational function whose Julia set is neither $\widehat{\mathbb{C}}$ nor a circle nor a line segment. Further let $R$ be of the form
\begin{equation}
	R(z)=\gamma z^m\frac{P_1(z^{n_1})}{P_2(z^{n_2})},
	\label{form_coat}
\end{equation}
where $\gamma\neq 0$ is a constant; $P_1(z)$ and $P_2(z)$ are monic polynomials; and $m\geq 2$, $n_1$, $n_2$ are natural numbers, maximal for the expression. If $R$ does not have a parabolic or rotation domain, and $n=\gcd \{n_1,n_2\}\geq 2$ then $\Sigma R=\{z\mapsto \lambda z:\lambda^n=1\}$.
\end{thm}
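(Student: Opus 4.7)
My plan is to prove the two inclusions separately, with the ``easy'' direction coming from a direct computation with the explicit form of $R$, and the ``hard'' direction from the rigidity of Julia sets shared by two rational maps.

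For the inclusion $\{z\mapsto\lambda z:\lambda^n=1\}\subseteq \Sigma R$, I would argue directly. If $\lambda^n=1$, then since $n=\gcd(n_1,n_2)$ divides both $n_1$ and $n_2$, we have $\lambda^{n_1}=\lambda^{n_2}=1$, so substituting into the given form yields $R(\lambda z)=\lambda^m R(z)$. An induction then gives $R^k(\lambda z)=\lambda^{m^k} R^k(z)$ for all $k$, and the rotations $z\mapsto\lambda^{m^k}z$ take only finitely many values (each being an $n$-th root of unity since $(\lambda^m)^n=1$). Because composition with a fixed finite family of Euclidean isometries preserves normality, after passing to a subsequence on which the isometry is constant, the sequence $\{R^k\}$ is normal at $z$ if and only if it is normal at $\lambda z$. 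Hence $z\mapsto\lambda z$ preserves $\mathcal{J}(R)$.

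For the reverse inclusion, the hypothesis that $\mathcal{J}(R)$ is neither $\widehat{\mathbb{C}}$ nor a circle nor a line segment forces $\Sigma R$ to be a finite group of orientation-preserving Euclidean isometries, hence a finite cyclic group with a common fixed point $c$ (its non-trivial generator, being a rotation, has a unique fixed point that is then fixed by every iterate). Since $\Sigma R$ already contains the rotation $z\mapsto e^{2\pi i/n}z$ about the origin by the first step, we must have $c=0$; thus every element of $\Sigma R$ has the form $z\mapsto \mu z$ with $\mu^k=1$ for some positive integer $k$ that is a multiple of $n$.

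It remains to exclude extra rotations. If $\sigma(z)=\mu z$ lies in $\Sigma R$, then $\tilde R:=\sigma\circ R\circ\sigma^{-1}$ shares the Julia set of $R$ and admits the explicit form $\tilde R(z)=\gamma\mu^{1-m}z^m\, P_1(\mu^{-n_1}z^{n_1})/P_2(\mu^{-n_2}z^{n_2})$. Here I would invoke a rigidity theorem for rational maps with a common Julia set -- this is where the hypothesis of no parabolic or rotation domain enters -- to conclude that $\tilde R$ and $R$ differ only by a multiplicative constant. Equating $\tilde R(z)=c\,R(z)$ and using the maximality of $n_1$ and $n_2$ in the given representation of $R$ then forces $\mu^{n_1}=\mu^{n_2}=1$, and consequently $\mu^{\gcd(n_1,n_2)}=\mu^n=1$, completing the proof. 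The main obstacle is precisely this rigidity step: one must classify the exotic shared-Julia configurations (such as Latt\`es-type invariance and invariant rotation curves) and show that the absence of parabolic and rotation domains rules them out, so that any rational map sharing the Julia set of $R$ is forced to agree with $R$ up to a scalar.
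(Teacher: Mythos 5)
A preliminary remark on scope: the paper does not prove Theorem~\ref{CAOT} at all --- it is imported verbatim as Theorem~C of \cite{NP2025} and used as a black box --- so there is no internal proof to compare you against; your proposal has to stand on its own. Your first inclusion does: the identity $R(\lambda z)=\lambda^{m}R(z)$ for $\lambda^{n}=1$, the induction $R^{k}(\lambda z)=\lambda^{m^{k}}R^{k}(z)$, and the normality argument with the finite family of post-composed rotations correctly give $\{z\mapsto\lambda z:\lambda^{n}=1\}\subseteq\Sigma R$.

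The reverse inclusion, however, contains a genuine gap at exactly the point you yourself flag as the ``main obstacle''. You invoke a rigidity theorem asserting that, in the absence of parabolic and rotation domains and for a non-exceptional Julia set, any rational map with the same Julia set as $R$ (here $\sigma\circ R\circ\sigma^{-1}$) must equal $c\,R$ for a multiplicative constant $c$. No theorem of this strength is available: the known results on rational maps sharing a Julia set (Levin--Przytycki and their successors, or \cite{Ferreira2023} for symmetries) conclude at best that suitable \emph{iterates} agree up to a M\"{o}bius symmetry of the Julia set, that symmetry need not have the form $z\mapsto cz$, and an iterate-level relation does not descend to $\sigma\circ R\circ\sigma^{-1}=cR$ without substantial extra work. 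Thus the one step in which the hypotheses ``no parabolic or rotation domain'' are supposed to act --- the heart of the theorem --- is left unproved, and it is not merely a matter of quoting a classification. Two further soft spots: the finiteness of $\Sigma R$ (hence its cyclic structure with a common rotation centre) is asserted in one line, but since $\mathcal{J}(R)$ may be unbounded in $\mathbb{C}$ you must also exclude translations and infinite closed rotation subgroups, which is precisely where the exclusion of circles and segments, and the absence of rotation domains, genuinely intervene; and in the final algebraic step you need $P_{1}(z^{n_{1}})$ and $P_{2}(z^{n_{2}})$ to be coprime and the maximality of the triple $(m,n_{1},n_{2})$ to be exploited jointly, not for $n_{1}$ and $n_{2}$ separately. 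A more plausible route to the hard inclusion is dynamical rather than via shared-Julia-set rigidity: since $m\geq 2$, the origin is a superattracting fixed point of $R$, a symmetry permutes Fatou components, the absence of parabolic and rotation domains restricts which components exist, and a local (B\"{o}ttcher-type) analysis at $0$ and at $\infty$ is what ultimately forces $\mu^{n}=1$; none of this is supplied by your sketch.
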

\section{The dynamics of $H_p$}\label{Sect-3}
Since the structure of the Julia set of a rational function is usually complicated, determining the connectivity of this set is highly challenging. The first remarkable result on the connectivity of the Julia set of a rational function was given by Shishikura. He proved that whenever the Julia set is disconnected, the rational function has at least two weakly repelling fixed points (a fixed point which is either repelling or its multiplier is $1$) lying on two different components of the Julia set. The connectivity of the Julia set of Newton's method $N_p$ applied to a polynomial $p$ is is proved using this result, as $N_p$ has exactly one repelling fixed point at $\infty$, and all other fixed points are the roots of $p$, hence attracting. In the case of Halley's method, such a generalized statement cannot be made as $H_p$ always has at least two repelling fixed points. However, we can use the other aspect of Shishikura's theorem to study the connectivity of the Julia set of $H_p$. The precise statement is the following.
\begin{lem}\label{connected_J1}
Let $R$ be a rational function of degree at least two, and that there be a Julia component containing all the weakly repelling fixed points of $R$. Then $\mathcal{J}(R)$ is connected.
\end{lem}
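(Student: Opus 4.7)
The plan is to argue by contraposition, using precisely the version of Shishikura's theorem described in the paragraph immediately preceding the statement: if a rational map $R$ of degree at least two has disconnected Julia set, then there exist at least two weakly repelling fixed points of $R$ lying on two \emph{different} components of $\mathcal{J}(R)$. This is the strong form of Shishikura's theorem (not just the weaker assertion that two weakly repelling fixed points exist somewhere), and it is exactly what the authors have already recalled. The lemma is then nothing more than the contrapositive of that statement, recast in the form that will be convenient for applications to $H_p$, whose roots-of-$p$ fixed points are attracting while its extraneous fixed points and $\infty$ are repelling.

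The actual argument I would write is short. Assume for contradiction that $\mathcal{J}(R)$ is disconnected. By Shishikura's theorem in the strong form above, pick two weakly repelling fixed points $\zeta_1,\zeta_2$ of $R$ lying on two distinct Julia components $J_1\neq J_2$. By hypothesis there is a single Julia component $J$ containing every weakly repelling fixed point, so $\zeta_1,\zeta_2\in J$. Since distinct Julia components are disjoint, $\zeta_1\in J\cap J_1$ forces $J=J_1$, and similarly $J=J_2$, contradicting $J_1\neq J_2$. Hence $\mathcal{J}(R)$ must be connected.

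There is essentially no technical obstacle in the proof itself; the content of the lemma lies in the invocation of Shishikura's theorem. The only point that deserves care in the write-up is emphasizing that we are using the refined statement that places the weakly repelling fixed points on two \emph{different} components—this is what makes the hypothesis ``all weakly repelling fixed points lie on a single component'' strong enough to force connectivity. A citation to Shishikura's original paper (or to a textbook account such as Milnor's) should accompany the appeal.
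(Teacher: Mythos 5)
Your argument is correct and coincides with what the paper does: the lemma is stated as an immediate consequence (the contrapositive) of the strong form of Shishikura's theorem quoted just before it, namely that a disconnected Julia set forces two weakly repelling fixed points on \emph{different} Julia components, and your disjointness-of-components argument fills in exactly that step. No further comment is needed beyond including the citation to Shishikura, as you propose.
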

Since for $H_p$, the point at $\infty$ is a repelling fixed point, we can implement \cite[Lemma 4.3]{Nayak-Pal2022} to derive the subsequent result.
\begin{lem}\label{bdry_pole}
The boundary of an unbounded immediate attracting basin of $H_p$ corresponding to a root of $p$ contains at least one pole.
\end{lem}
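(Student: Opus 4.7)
The plan is to reduce the statement directly to Lemma~4.3 of \cite{Nayak-Pal2022}, as the authors indicate. That external lemma, in the form relevant here, asserts that for a rational map $R$ of degree at least two, if $U$ is an unbounded invariant Fatou component with $\infty$ a repelling fixed point of $R$ lying on $\partial U$, then $\partial U$ must contain a pole of $R$. The entire task is therefore to verify that the three ingredients, \emph{(i)} invariance of $U$, \emph{(ii)} unboundedness of $U$, and \emph{(iii)} $\infty$ being a repelling fixed point on $\partial U$, are available for $H_p$ and our basin $U$.

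I would proceed as follows. First, by the very definition of an immediate basin of attraction of a root $\alpha$ of $p$, $U$ is a Fatou component containing the attracting fixed point $\alpha$ of $H_p$ and satisfies $H_p(U)\subseteq U$, so $U$ is forward invariant. Second, unboundedness of $U$ is the given hypothesis. Third, since $\alpha\in U\subset\mathbb{C}$ and $U$ is unbounded in $\mathbb{C}$, its closure in $\widehat{\mathbb{C}}$ contains $\infty$, and because $\infty\notin U$ (indeed, by Proposition~\ref{prop_f.pts}, $\infty$ is a repelling fixed point of $H_p$ with multiplier $\tfrac{d+1}{d-1}>1$, so $\infty\in\mathcal{J}(H_p)$), we conclude $\infty\in\partial U$. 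All hypotheses of Lemma~4.3 of \cite{Nayak-Pal2022} are then satisfied, and applying it yields a pole of $H_p$ on $\partial U$.

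The main obstacle is, strictly speaking, not in the present lemma but in correctly matching the statement of \cite[Lemma~4.3]{Nayak-Pal2022} to our setting. The geometric content of that underlying lemma is the following: were $\partial U$ to contain no pole, then in the chart $w=1/z$ near $w=0$ the map $H_p$ would be holomorphic on a full neighborhood of $\partial U$ intersected with that neighborhood, and the repelling linearisation at the fixed point $\infty$ would be incompatible with the forward invariance $H_p(U)\subseteq U$ together with the fact that $U$ accumulates at $\infty$; preimages of a bounded neighborhood of $\alpha$ must reach out to $\infty$ in order to exhaust the unbounded component $U$, and this can only occur through a point where $H_p$ sends some boundary point to $\infty$, i.e., through a pole on $\partial U$. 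Once this is accepted from \cite{Nayak-Pal2022}, the proof of Lemma~\ref{bdry_pole} is a one-line invocation.
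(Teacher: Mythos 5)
Your proposal is correct and follows essentially the same route as the paper, which likewise proves the lemma by noting that $\infty$ is a repelling fixed point of $H_p$ (Proposition~\ref{prop_f.pts}) and then invoking \cite[Lemma~4.3]{Nayak-Pal2022}. Your explicit verification of the hypotheses (forward invariance of the immediate basin, unboundedness, and $\infty\in\partial U$ because $\infty$ lies in the Julia set) is exactly the implicit content of the paper's one-line argument.
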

This result combined with \cite[Lemma 3.5]{Sym_dyn}, provides another tool to prove the connectivity of $\mathcal{J}(H_p)$.
\begin{lem}\label{all_pole}
If there is an unbounded Julia component of $H_p$ containing all the poles, then $\mathcal{J}(H_p)$ is connected.
\end{lem}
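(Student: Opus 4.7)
My plan is to reduce the statement to Lemma~\ref{connected_J1}: it will suffice to show that the hypothesized unbounded Julia component $J_0$, which by assumption contains every pole of $H_p$, in fact contains every weakly repelling fixed point of $H_p$.

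By Proposition~\ref{prop_f.pts}, the weakly repelling fixed points of $H_p$ are exactly $\infty$ together with the special critical points $\beta$ of $p$; the roots of $p$ are all attracting with multiplier $(k-1)/(k+1)<1$ and so contribute none. Since $J_0$ is unbounded and closed in $\widehat{\mathbb{C}}$, one gets $\infty \in J_0$ for free. The real work is then to place each special critical point $\beta$ in the same component $J_0$. This is precisely the step for which the sentence preceding the lemma announces a combination of Lemma~\ref{bdry_pole} with \cite[Lemma~3.5]{Sym_dyn}: Lemma~\ref{bdry_pole} anchors every unbounded immediate basin of a root of $p$ to $J_0$ via a pole on its boundary, while \cite[Lemma~3.5]{Sym_dyn}, a structural device tailored to root-finding methods satisfying the Scaling property, transports each extraneous repelling fixed point $\beta$ into the Julia component housing all the poles. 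With $\infty$ and every such $\beta$ inside $J_0$, Lemma~\ref{connected_J1} gives $\mathcal{J}(H_p)=J_0$, which is in particular connected.

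The main obstacle is precisely this transport step. Because $H_p(\beta)=\beta\neq\infty$ whenever $\beta$ is a special critical point, such a $\beta$ is not itself a pole of $H_p$, so the hypothesis of the lemma does not constrain it directly; ruling out the a priori possibility that $\beta$ lies in a Julia component disjoint from $J_0$ is the only nontrivial input, and it is supplied by \cite[Lemma~3.5]{Sym_dyn}. The remainder of the argument is topologically transparent, once both ingredients are in hand.
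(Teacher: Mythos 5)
Your reduction to Lemma~\ref{connected_J1} has a genuine gap at exactly the point you flag as ``the transport step.'' The hypothesis of Lemma~\ref{all_pole} only constrains the poles; it says nothing about where the finite extraneous fixed points sit, and placing them in the unbounded component $J_0$ is essentially equivalent to the conclusion itself (once $\mathcal{J}(H_p)$ is known to be connected there is only one component, but before that there is no a priori reason an extraneous fixed point $\beta$ cannot lie in a bounded Julia component disjoint from $J_0$). You supply no argument for this step: you outsource it to \cite[Lemma~3.5]{Sym_dyn} while guessing its content, and the guessed description (``a device that transports each extraneous repelling fixed point into the Julia component housing all the poles'') is not what that lemma does; it is a connectivity criterion of the same type as the statement being proved, not a tool for locating extraneous fixed points. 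So the core of your proof is an unproven assertion, and the Shishikura route via Lemma~\ref{connected_J1} is not the intended (nor an evidently workable) path here. A secondary slip: the finite weakly repelling fixed points of $H_p$ are \emph{all} critical points of $p$ that are not roots of $p$ (Proposition~\ref{prop_f.pts}(2) includes simple critical points, multiplier $3$), not only the special (multiple) ones.

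For comparison, the statement does not need Shishikura at all. Since $J_0$ is closed and unbounded it contains $\infty$, and $H_p(J_0)$ is a connected subset of $\mathcal{J}(H_p)$ containing $\infty$, so $H_p(J_0)\subseteq J_0$; hence $J_0$ lies in one component of $H_p^{-1}(J_0)$. Every component of the preimage of a compact connected set maps onto that set, so each component of $H_p^{-1}(J_0)$ contains a point of $H_p^{-1}(\infty)$, i.e.\ a pole or $\infty$; by hypothesis all of these lie in $J_0$, so $H_p^{-1}(J_0)$ is connected, and inductively so is $H_p^{-n}(J_0)$ for every $n$, each containing $J_0$. The closure of the increasing union $\bigcup_n H_p^{-n}(J_0)$ is then connected, and by the density of backward orbits of Julia points it equals $\mathcal{J}(H_p)$. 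This (or the direct citation of \cite[Lemma~3.5]{Sym_dyn}, whose content is this criterion) is the argument the paper relies on; your proposal, as written, does not close the step it depends on.
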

These two results are instrumental in proving the connectivity of $\mathcal{J}(H_p)$. Additionally, the following lemma is pivotal in investigating the real dynamics of $H_p$.
\begin{lem}\label{gen_dyn}
Suppose $R$ is a rational function with real coefficients having two real fixed points $x_{1}$ and $x_{2}\left(x_{1}<x_{2}\right)$ such that $R$ does not have any critical point, pole, or fixed point on $(x_{1}, x_{2})$. Then  the successive iterations $\{R^n(x)\}$ of any point $x$ in $(x_{1}, x_{2})$ will converge to either $x_{1}$ or $x_{2}$. More precisely, if $R(x)<x$ (or $R(x)>x$) then $\lim\limits_{n \to \infty} R^n(x)=x_1$ (or $\lim\limits_{n \to \infty} R^n(x)=x_2$, respectively).
\end{lem}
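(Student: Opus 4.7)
The plan is to reduce the statement to elementary real analysis on the closed interval $[x_1,x_2]$. First I would observe that since $R$ has real coefficients, it maps $\mathbb{R}\setminus\{\text{poles}\}$ into $\mathbb{R}\cup\{\infty\}$, so its restriction to $[x_1,x_2]$ is a well-defined real-valued function: the hypothesis rules out poles in $(x_1,x_2)$, and the endpoints $x_1,x_2$ cannot be poles because $R(x_i)=x_i\in\mathbb{R}$. Hence $R$ is continuous (and real-analytic) on all of $[x_1,x_2]$.

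Next, since $R$ has no critical point in $(x_1,x_2)$, the real derivative $R'$ is nowhere zero there, hence has constant sign by the intermediate value theorem applied to $R'$. Combined with $R(x_1)=x_1<x_2=R(x_2)$, this forces $R'>0$, so $R$ is strictly increasing on $[x_1,x_2]$ and therefore $R\bigl((x_1,x_2)\bigr)\subseteq(x_1,x_2)$. Now introduce the auxiliary continuous function $g(x)=R(x)-x$ on $(x_1,x_2)$. By the no-interior-fixed-point hypothesis, $g$ never vanishes, so once again by the intermediate value theorem $g$ has constant sign on $(x_1,x_2)$.

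Pick any $x\in(x_1,x_2)$. If $R(x)<x$, then $g<0$ throughout $(x_1,x_2)$, and since $R$ preserves $(x_1,x_2)$ the orbit $\{R^n(x)\}$ is strictly decreasing and bounded below by $x_1$. By the monotone convergence theorem it tends to some limit $L\in[x_1,x_2)$, and by continuity of $R$ at $L$ we get $R(L)=L$; since $x_1$ is the only fixed point in $[x_1,x_2)$, we conclude $L=x_1$. The case $R(x)>x$ is entirely symmetric and yields $R^n(x)\to x_2$.

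The argument is almost routine; the only mild subtlety (and what I would be careful to articulate) is the propagation of continuity and monotonicity up to the boundary. This is why it is worth recording explicitly that $x_1,x_2$ cannot be poles (they are fixed points with finite image) and that the constant sign of $R'$ on the open interval together with continuity at the endpoints is enough to upgrade to strict monotonicity on $[x_1,x_2]$, which is what lets the invariance $R((x_1,x_2))\subseteq(x_1,x_2)$ and the monotone-orbit argument go through.
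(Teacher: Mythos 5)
Your argument is correct and follows essentially the same route as the paper's own proof: constant sign of $R-x$ and monotonicity of $R$ on the interval give a monotone, bounded orbit that converges to an endpoint. In fact you make explicit two steps the paper leaves implicit --- that the interval $(x_1,x_2)$ is invariant under $R$, and that the limit of the orbit must be a fixed point (by continuity) and hence equals $x_1$ or $x_2$ --- so your write-up is, if anything, slightly more complete than the original.
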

\begin{proof}
As $R$ has no critical point on $(x_1, x_2)$, $R$ is an increasing function on $(x_1, x_2)$. Moreover, as $R$ does not have any other fixed point on that interval, for all $x\in(x_1, x_2)$ either $R(x)<x$ or $R(x)>x$. First we consider $R(x)<x$ for all $x\in(x_1, x_2)$. Then for any $x\in (x_1,x_2)$ the sequence $\{R^n(x)\}$ is a decreasing sequence of real numbers which is bounded below by $x_1$. Thus, we have $\lim\limits_{n \to \infty} R^n(x)=x_1$. The other part can be proved similarly.
\end{proof}
For a rational function $R$, we call a Fatou component $U$ eventually lands on another Fatou component $V$ if there exists a natural number $k$ such that $R^k(U)=V$. In \cite[Lemma~3.3]{Sym_dyn} there is a criteria for a bounded Fatou component. The next result follows from that lemma.
\begin{lem}\label{bdd_FC}
Let $\mathcal{J}(H_p)$ be locally connected and $\mathcal{A}$ be an invariant Fatou component. Then, any Fatou component $U$ different from $\mathcal{A}$ that eventually lands on $\mathcal{A}$ is bounded.
\end{lem}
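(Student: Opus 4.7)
The plan is to invoke Lemma~3.3 of \cite{Sym_dyn}, which is a general boundedness criterion: if $R$ is a rational map, $\mathcal{J}(R)$ is locally connected, $\mathcal{A}$ is an invariant Fatou component, and there is a repelling fixed point of $R$ lying in $\mathcal{J}(R)$, then every Fatou component $U\neq \mathcal{A}$ eventually landing on $\mathcal{A}$ is bounded. So the task reduces to checking the hypotheses in our setting.

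First I would apply Proposition~\ref{prop_f.pts}(3), which tells us that the point at infinity is a repelling fixed point of $H_p$ with multiplier $(d+1)/(d-1)>1$. Therefore $\infty\in\mathcal{J}(H_p)$. Together with the hypothesis that $\mathcal{J}(H_p)$ is locally connected and that $\mathcal{A}$ is invariant, all hypotheses of \cite[Lemma~3.3]{Sym_dyn} are met with $R=H_p$, and the conclusion follows at once.

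For transparency I would also outline the strategy behind the cited lemma. Suppose, for contradiction, that $U\neq\mathcal{A}$ is unbounded and $H_p^{k}(U)=\mathcal{A}$ for some $k\geq 1$. Unboundedness of $U$ yields $\infty\in\overline{U}$. Local connectivity of $\mathcal{J}(H_p)$ forces the boundary of every Fatou component to be locally connected, so $H_p$ (and hence $H_p^{k}$) extends continuously to $\overline{U}$; since $H_p^{k}(\infty)=\infty$, we get $\infty\in\partial\mathcal{A}$. Using the linearization of $H_p$ near the repelling fixed point $\infty$ together with the invariance of $\mathcal{A}$, one shows that any access of $U$ to $\infty$ must coincide with that of $\mathcal{A}$ on a sufficiently small neighbourhood of $\infty$, which forces $U=\mathcal{A}$, contradicting $U\neq\mathcal{A}$.

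The main obstacle, which is entirely dispatched inside the cited lemma, is this last step: showing that two distinct Fatou components cannot simultaneously access a shared repelling fixed point in a manner compatible with one being an iterated preimage of the other. This relies on the linearization theorem at $\infty$ and a careful sector analysis around this repelling fixed point. In the present application, however, this subtlety is absorbed into \cite[Lemma~3.3]{Sym_dyn}, so the proof here reduces to the single verification, via Proposition~\ref{prop_f.pts}, that $\infty$ is a repelling fixed point of $H_p$.
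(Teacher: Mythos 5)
Your proposal is correct and follows essentially the same route as the paper: both reduce the statement to \cite[Lemma~3.3]{Sym_dyn}, using Proposition~\ref{prop_f.pts}(3) to place the repelling fixed point $\infty$ in $\mathcal{J}(H_p)$ and the local connectivity hypothesis to make that lemma (or its proof) applicable. The only cosmetic difference is that the paper splits off the trivial case of bounded $\mathcal{A}$, notes that local connectivity gives simple connectivity of $\mathcal{A}$ and accessibility of $\infty$, and then invokes the \emph{proof} of the cited lemma for a one-step preimage before iterating, whereas you invoke the lemma's statement directly; the substance is the same.
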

\begin{proof}
Since $\infty$ is a repelling fixed point of $H_p$, it follows trivially that if $\mathcal{A}$ is bounded then all its backward images are bounded. Now let $\mathcal{A}$ be unbounded. As $\mathcal{J}(H_p)$ is locally connected, $\mathcal{A}$ is simply connected, and $\infty$ is accessible from any interior point of $\mathcal{A}$ (i.e., for any point $z^*\in \mathcal{A}$, there is a simple arc $\gamma:[0,1]\to \mathcal{A}\cup \{\infty\}$ such that $\gamma(0)=z^*$ and $\gamma(1)=\infty$). Thus, it follows from the proof of \cite[Lemma 3.3]{Sym_dyn}, that if $U$ is a Fatou component different from $\mathcal{A}$ such that $H_p(U)=\mathcal{A}$, then $U$ is bounded. The rest is trivial as any Fatou component that lands on $U$ is bounded.
\end{proof}
Now we state the well-known Riemann-Hurwitz formula which is useful to determine the local degree of $H_p$ in an immediate basin.
\begin{lem}[Riemann-Hurwitz formula]
Let $R$ be a rational function that maps a Fatou component $\Omega_1$ to $\Omega_2$. Then $R:\Omega_1\to \Omega_2$ is a proper map of some
degree $ k $ and $ \mathcal{C}(\Omega_1) - 2 = k(\mathcal{C}(\Omega_2 ) - 2) + m $, where $ \mathcal{C}(.) $ denotes the connectivity of a domain and $ m $ is the number of critical points of $ R $ (counting with multiplicity) in $\Omega_1$.
\end{lem}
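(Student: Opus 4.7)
The plan is to prove the lemma in two stages. First, I would verify that the restriction $R|_{\Omega_1}:\Omega_1\to\Omega_2$ is a proper holomorphic map of some finite degree $k$. Second, I would invoke the classical Riemann--Hurwitz formula for a proper holomorphic map between finitely connected Riemann surfaces and translate Euler characteristics into connectivities to obtain the stated identity.

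For the first stage, the key observation is that a rational function maps any Fatou component surjectively onto a Fatou component, so writing $\Omega_2=R(\Omega_1)$ is legitimate. To verify properness of the restriction, I would take an arbitrary sequence $\{z_n\}\subset\Omega_1$ with no limit point in $\Omega_1$, pass to a subsequence converging to some $z_\infty\in\partial\Omega_1\subset\mathcal{J}(R)$, and use continuity of $R$ on $\widehat{\mathbb{C}}$ together with total invariance of the Julia set to conclude $R(z_n)\to R(z_\infty)\in\mathcal{J}(R)$, hence outside $\Omega_2$. This shows that preimages of compact subsets of $\Omega_2$ are compact in $\Omega_1$, i.e., $R|_{\Omega_1}$ is proper. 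Consequently there is a well-defined degree $k\in\mathbb{N}$: for every $w\in\Omega_2$, the preimages in $\Omega_1$ form a finite set whose cardinalities, counted with multiplicity, equal the constant $k$.

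For the second stage, since $\Omega_1,\Omega_2$ are planar domains of connectivity $\mathcal{C}(\Omega_i)$, their Euler characteristics satisfy $\chi(\Omega_i)=2-\mathcal{C}(\Omega_i)$, a standard computation obtained, for instance, by viewing each $\Omega_i$ up to homotopy equivalence as $\widehat{\mathbb{C}}$ minus $\mathcal{C}(\Omega_i)$ disjoint closed disks. The classical Riemann--Hurwitz formula for a proper degree-$k$ holomorphic map between Riemann surfaces, with total ramification $\sum_{p\in\Omega_1}(\deg_p R-1)=m$, then reads
\[
\chi(\Omega_1)=k\,\chi(\Omega_2)-m.
\]
Substituting $\chi(\Omega_i)=2-\mathcal{C}(\Omega_i)$ and rearranging produces exactly
\[
\mathcal{C}(\Omega_1)-2=k\bigl(\mathcal{C}(\Omega_2)-2\bigr)+m,
\]
as asserted.

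The only step that requires genuine checking, as opposed to invocation of a standard result, is the properness of $R|_{\Omega_1}$; the rest is bookkeeping within the classical Riemann--Hurwitz machinery. Implicit throughout is the hypothesis that $\Omega_2$ has finite connectivity, which by properness forces $\Omega_1$ to have finite connectivity as well, so that both sides of the identity are meaningful finite integers.
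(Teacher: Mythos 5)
Your proof is correct. The paper itself gives no argument here: it states the Riemann--Hurwitz relation for Fatou components as a well-known classical fact and immediately applies it, so there is no internal proof to compare against. Your two-stage argument is the standard one from the literature: properness of $R|_{\Omega_1}$ via the boundary-in-the-Julia-set argument together with complete invariance of $\mathcal{J}(R)$ (which also yields, by the same compactness reasoning, that $R(\Omega_1)$ is open and relatively closed in $\Omega_2$, hence all of $\Omega_2$), followed by the classical formula $\chi(\Omega_1)=k\,\chi(\Omega_2)-m$ and the translation $\chi(\Omega_i)=2-\mathcal{C}(\Omega_i)$. Your closing remark that finite connectivity of $\Omega_2$ is implicitly assumed (and propagates to $\Omega_1$ by properness) is an appropriate caveat; in the paper's applications both components are simply connected, so nothing is lost.
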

This formula asserts that if $\Omega_1$ and $\Omega_2$ are simply connected, then the local degree of $R$ in $\Omega_1$ is $m+1$.
Now we present the dynamical study of Halley's method for certain classes of polynomials. As $H_p$ is a third-order convergent method, a simple root of $p$ is a critical point of $H_p$ with multiplicity at least two. We call a critical point of $H_p$ is a \textit{free critical point} if it is not a root of $p$. These free critical points determine the dynamics of $H_p$. We first consider the polynomial $p$ having exactly two roots with equal multiplicity. It is known from \cite[Theorem 3]{Liu-Gao2015}, that $\mathcal{J}(K_{p,n})$ is a line. We provide here a simplified proof for $H_p$.
\begin{maintheorem}\label{exact2}
	The Julia set of $H_p$ is a line if and only if $p$ has exactly two distinct roots, each with the same multiplicity.
\end{maintheorem}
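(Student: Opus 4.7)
The plan is to handle the two directions separately, using the Scaling property at each end to reduce to a normal form.

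For the sufficiency, suppose $p$ has exactly two distinct roots each with multiplicity $k$. Scaling reduces us to $p(z)=(z^2-1)^k$, and a direct calculation from~\eqref{H_formula} gives
\[
H_p(z)=\frac{(2k-1)z^3+3z}{(2k+1)z^2+1}.
\]
Since $p$ is even, $H_p$ is odd; combined with the real coefficients, this yields $\overline{H_p(iy)}=-H_p(iy)$, so the imaginary axis is forward-invariant. Conjugating by the Möbius map $M(z)=(z-1)/(z+1)$, which sends the imaginary axis onto $\partial\mathbb{D}$ and the roots $\pm 1$ onto $0,\infty$, produces $\tilde H=M\circ H_p\circ M^{-1}$: a degree-$3$ rational map that preserves $\partial\mathbb{D}$ and fixes $0$ with multiplier $(k-1)/(k+1)<1$. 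An argument-principle count on $\partial\mathbb{D}$ identifies $\tilde H|_{\mathbb{D}}$ as a finite Blaschke product mapping $\mathbb{D}$ onto $\mathbb{D}$, with an interior attracting fixed point. By the classical result that the Julia set of such a Blaschke product equals $\partial\mathbb{D}$, we obtain $\mathcal{J}(H_p)=M^{-1}(\partial\mathbb{D})=i\mathbb{R}$; undoing the Scaling then shows that for the original $p$, the set $\mathcal{J}(H_p)$ is the perpendicular bisector of the two roots, a line.

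For the necessity, assume $\mathcal{J}(H_p)=L$ is a line. Then $\mathcal{F}(H_p)$ consists of two open half-planes $H_+,H_-$, each a simply connected Fatou component. Every root of $p$ is an attracting fixed point by Proposition~\ref{prop_f.pts}, and so lies in some $H_i$, forcing that component to be $H_p$-invariant. An invariant Fatou component of attracting type contains exactly one attracting fixed point (its unique attractor), whence each half-plane holds at most one root; together with $p$ having at least two distinct roots, this gives exactly two, and we may write $p(z)=c(z-a)^j(z-b)^k$ with $a\in H_+$, $b\in H_-$, and $j,k\geq 1$.

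To see $j=k$, I would apply Scaling to arrange $L=\mathbb{R}$. The invariance of $L$ under $H_p$ forces $H_p$ to send real inputs to real outputs, so the rational map $z\mapsto\overline{H_p(\bar z)}$ agrees with $H_p$ on $\mathbb{R}$ and hence equals $H_p$. Thus $H_p$ has real coefficients and commutes with the reflection $\sigma(z)=\bar z$. Since $\sigma$ swaps the two half-planes and sends fixed points to fixed points, $\sigma(a)=b$. A short chain-rule computation across this anti-holomorphic conjugation gives $H_p'(b)=\overline{H_p'(a)}$; as both multipliers are the real numbers $\tfrac{j-1}{j+1}$ and $\tfrac{k-1}{k+1}$, the equality forces $j=k$.

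The main obstacle will be the Blaschke-product step in the sufficiency argument: showing $\tilde H$ actually maps $\mathbb{D}$ into $\mathbb{D}$ (rather than to the exterior) and invoking the classical theorem that a finite Blaschke product of degree at least two with an interior attracting fixed point has Julia set $\partial\mathbb{D}$. The reflection-symmetry identity that drives the reverse direction is the other nontrivial ingredient, but once it is in hand the multiplier comparison reduces to a one-line calculation.
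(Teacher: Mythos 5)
Your forward direction (two equal-multiplicity roots $\Rightarrow$ the Julia set is a line) contains a genuine gap, and it sits exactly at the step you flag. After conjugating by $M(z)=(z-1)/(z+1)$, the invariance $H_p(i\mathbb{R}\cup\{\infty\})\subseteq i\mathbb{R}\cup\{\infty\}$ only gives $\tilde H(\partial\mathbb{D})\subseteq\partial\mathbb{D}$; it says nothing yet about $\tilde H(\mathbb{D})$. Moreover the danger is not, as you phrase it, that $\mathbb{D}$ might map ``to the exterior'': a priori $\tilde H(\mathbb{D})$ could straddle the circle, which happens precisely when $H_p^{-1}(i\mathbb{R}\cup\{\infty\})$ is strictly larger than $i\mathbb{R}\cup\{\infty\}$, and no argument-principle count is carried out in your proposal to exclude this. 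To close it you need the explicit data that the paper computes: the free critical points satisfy $z^2=\frac{3\pm 2i\sqrt{3}\sqrt{k^2-1}}{4k^2-1}$, hence lie off both axes when $k\geq 2$ (for $k=1$ the only critical points are $\pm1$), and the poles $\pm i/\sqrt{2k+1}$ are simple and lie on the axis. Consequently $H_p$ restricted to the extended imaginary axis is an unbranched self-covering of that circle; since the fiber over $0$ is $\{0,\pm i\sqrt{3/(2k-1)}\}$, the covering degree is $3=\deg H_p$, so the preimage of the extended axis is the axis itself, each half-plane maps properly onto a half-plane, and, as $1$ is fixed, the right half-plane maps onto itself. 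Only now is $\tilde H$ a degree-three Blaschke product, and your classical fact (degree $\geq 2$ plus an interior attracting fixed point forces $\mathcal{J}=\partial\mathbb{D}$) applies. Alternatively, one can check that all three solutions of $H_p(z)=1$, namely $1$ and $\frac{1\pm\sqrt{2-2k}}{2k-1}$, have positive real part, so all zeros of $\tilde H$ lie in $\mathbb{D}$ and the circle symmetry forces the Blaschke form. With such an argument inserted, your route is a legitimate alternative to the paper's, which instead shows that $\mathcal{A}_{\pm1}$ are unbounded and simply connected (Lemmas~\ref{gen_dyn}, \ref{bdry_pole}, \ref{all_pole}), applies the Riemann--Hurwitz formula to get complete invariance, and deduces that the Julia set is a Jordan curve equal to the imaginary axis.

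Your converse direction is correct and genuinely different from the paper's. The paper normalizes the line to the imaginary axis, pins the extraneous fixed point $\frac{bk+am}{k+m}$ at the origin, and then forces $k=m$ from the requirement that $H_p(iy)$ be purely imaginary; you instead observe that invariance of the line makes $H_p$ commute with the reflection $z\mapsto\bar z$, so the two roots are complex conjugate and their real multipliers $\frac{j-1}{j+1}=\frac{k-1}{k+1}$ must agree, giving $j=k$ at once. Your preliminary step, that each of the two half-plane Fatou components contains at most one attracting fixed point and hence $p$ has exactly two distinct roots, also makes explicit a claim the paper only asserts. This half of the proposal I would keep as is.
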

\begin{proof}
First we consider that $p$ is having exactly two roots with same multiplicity. Using the Scaling property we can consider $p(z)=\left(z^{2}-1\right)^{k}$. Then	$$
H_{p}(z)=\frac{z\left((2 k-1) z^{2}+3\right)}{(2 k+1) z^{2}+1}\qquad\text{and}\qquad H_{p}^{\prime}(z)=\frac{\left(4 k^{2}-1\right) z^{4}-6 z^{2}+3}{\left((2 k+1) z^{2}+1\right)^{2}}.
$$
The free critical points and the poles of $H_p$ are the solutions of
$ z^{2}=\frac{3 \pm 2i \sqrt{3} \sqrt{k^{2}-1}}{\left(4 k^{2}-1\right)}$ and $z^2=- \frac{1}{2k+1}$ respectively, which are non-real complex numbers.
As $\mathcal{J}(H_{p})$ is symmetric about both the axes, all critical points are in $\cup \mathcal{A}_{ \pm 1}$. As the origin is the only finite extraneous fixed point, it follows from Lemma~\ref{gen_dyn} that
$\mathcal{A}_{ \pm 1}$ are unbounded, and hence, their respective boundary contains at least one pole (by Lemma~\ref{bdry_pole}). Again, as poles are non-real, $\partial \mathcal{A}_{ \pm 1}$ contains both the poles. Hence, by Lemma~\ref{all_pole} we get that
$\mathcal{A}_{\pm 1}$ are simply connected. Using Riemann-Hurwitz formula, we get that the local degree of $H_p$ in each immediate basin is $3$, and therefore both the immediate basins are completely invariant. Hence, the Fatou set $\mathcal{F}(H_p)$ is the union of two immediate basins, and consequently, $\mathcal{J}(H_p)$ is a Jordan curve. Since the imaginary axis is invariant under the iteration of $H_{p}$ and both the poles are purely imaginary, as well as $\mathcal{J}(H_p)$ is preserved by the reflection about both the axes, we conclude that the Julia set is the imaginary axis.
\par
Conversely, let $\mathcal{J}(H_p)$ be a line.
Then $ p$ has exactly two roots, say $a$ and $b$, with multiplicity $k$ and $m$, respectively. Without loss of generality we can consider that $a$ and $ b$ are real.
Then $\mathcal{J}(H_p)$ is symmetric about the reflexion of real axis, and hence, $\mathcal{J}(H_{p})$ is a vertical line.
Again, using the Scaling property, we can consider the $\mathcal{J}(H_{p})$ as the imaginary axis. Note that the extraneous fixed point of $H_p$ is $\frac{b k+a m}{k+m}$, which is real and repelling. As $0 \in \mathcal{J}(H_p)$ is the only real number on the Julia set of $H_p$, we have
$
\frac{b k+a m}{k+m}=0$. This gives $b=-\frac{a m}{k} .
$
Therefore, we get that $ p(z)=(z-a)^{k}\left(z+\frac{a m}{k}\right)^{m} .
$ Consider the affine map $T(z)=a z$. Then $\mathcal{J}\left(H_{p}\right)=T\left(\mathcal{J}\left(H_{p}\right)\right)$, and hence, using the Scaling property we can consider
$
p(z)=(z-1)^{k}\left(z+\frac{m}{k}\right)^{m} .
$
Then $$H_{p}(z)=\frac{z\left(k(k+m-1) z^{2}+2(k-m) z+3 m\right)}{k(k+m+1) z^{2}+m}.$$
Note that, as $\mathcal{J}(H_{p})$ is the imaginary axis, therefore for every $y \in \mathbb{R}$, there exists $y^{*} \in \mathbb{R}$ such that
$H_{p}(i y)=i y^{*}.$
This can only be true if $k=m$.
Hence the result.
\end{proof}
It follows from the above theorem that $\Sigma H_p$ is an infinite set consisting of non-trivial translations whenever $p$ has exactly two roots with equal multiplicity. As $\infty$ is always a superattracting fixed point for a non-linear polynomial, $\Sigma p$ does not contain a translation. Therefore, the equality in $\Sigma p$ and $\Sigma H_p$ does not hold in this case. However, for a normalized polynomial with non-trivial symmetry group, having at least three distinct roots, the symmetry group of the corresponding Halley's method is non-trivial, finite, and consisting of rotations about the origin. Thus, the equality in these two symmetry groups can be expected.
\begin{maintheorem}\label{m1}
	Consider the following classes of normalized polynomials having at least two distinct roots:
	\begin{enumerate}
		\item unicritical
		\item cubic polynomials whose respective symmetry group is non-trivial
		\item quartic polynomials having a root at the origin and with non-trivial symmetry groups.
	\end{enumerate}
	Then the following are true:
	\begin{enumerate}[(i)]
		\item The Julia set of Halley's method is connected.
		\item The immediate basins corresponding to the roots of the polynomial are unbounded.
		\item The corresponding Halley's method is convergent.
		\item The symmetry groups of the polynomial and its corresponding Halley's method are identical.
	\end{enumerate}
\end{maintheorem}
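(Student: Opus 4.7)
The strategy is to reduce each of the three classes to a short list of canonical polynomials by the Scaling property, perform a critical-point analysis in each case, and then invoke the technical machinery collected above. In class (1) the unicritical hypothesis forces $p(z)=z^{d}-1$; in class (2) a normalized cubic with non-trivial symmetry is either $p(z)=z^{3}-1$ (already covered by (1)) or $p(z)=z^{3}-z$; in class (3) a normalized quartic with a root at the origin and non-trivial symmetry reduces to $p(z)=z^{4}-z$ or $p(z)=z^{4}-z^{2}$. In every case $p$ is invariant under a rotation $z\mapsto \omega z$ of some order $n\geq 2$, and by the Scaling property this rotation also permutes the Fatou and Julia components of $H_p$. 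I would compute $H_p$ explicitly in each canonical case, identify its poles, and enumerate its free critical points; since $H_p$ is cubically convergent, every simple root of $p$ is a critical point of multiplicity at least two, so a Riemann-Hurwitz count already pins down the local degree of $H_p$ on a simply connected immediate basin.

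To prove (ii), I would analyse the dynamics on an invariant real axis. Each canonical $p$ has at least two real roots, and between any two consecutive real roots lies exactly one special critical point of $p$, which by Proposition~\ref{prop_f.pts} is a real repelling extraneous fixed point of $H_p$. After checking that $H_p$ has no real pole between consecutive real roots, Lemma~\ref{gen_dyn} forces every point of this real interval to converge to one of the two neighbouring roots, while on the unbounded half-lines a comparison with the dynamics at $\infty$ (repelling by Proposition~\ref{prop_f.pts}) pushes orbits back toward the nearest real root. Hence each immediate basin of a real root contains an unbounded real arc and is itself unbounded, and the rotational symmetry then propagates this property to the basin of every root of $p$.

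For (i), once I know each immediate basin is simply connected (by the Riemann-Hurwitz count) and unbounded, Lemma~\ref{bdry_pole} places at least one pole on each of these boundaries; the rotational symmetry of the pole set together with the fact that all these unbounded basins cluster at $\infty$ forces every pole to lie on a common unbounded Julia component, and Lemma~\ref{all_pole} then yields connectivity of $\mathcal{J}(H_p)$. For (iii), I would show that every free critical point iterates into a root basin (via the real-axis monotonicity argument combined with the symmetry action), which accounts for all critical orbits of $H_p$ and therefore precludes both the existence of attracting cycles other than the roots and the existence of parabolic or rotation domains; convergence of $H_p$ then follows.

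Finally, for (iv), in each canonical case $H_p$ admits a presentation $\gamma z^{m}P_{1}(z^{n})/P_{2}(z^{n})$ with $n$ equal to the order of the cyclic symmetry of $p$. Since each class under consideration has either at least three distinct roots or two distinct roots of unequal multiplicity, Theorem~\ref{exact2} prevents $\mathcal{J}(H_p)$ from being a line, and Proposition~\ref{prop_f.pts} prevents it from being $\widehat{\mathbb{C}}$ or a circle; combined with the absence of parabolic or rotation domains established in (iii), Theorem~\ref{CAOT} applies and gives $\Sigma H_p=\{z\mapsto \lambda z:\lambda^{n}=1\}=\Sigma p$. I expect the main technical obstacle to be step (iii) in the quartic cases: tracking all free critical points and ruling out parasitic attracting cycles off the real axis requires combining the rotational symmetry with a careful Riemann-Hurwitz accounting to show that no critical orbit forms a new cycle.
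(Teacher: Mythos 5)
Your skeleton (canonical forms via the Scaling property, real-line monotonicity with Lemma~\ref{gen_dyn}, Riemann--Hurwitz for simple connectivity, poles on unbounded boundaries via Lemmas~\ref{bdry_pole} and \ref{all_pole}) matches the paper, but your plan has a genuine gap at the immediate basin $\mathcal{A}_0$ of the origin in classes (2) and (3). Rotational symmetry only propagates unboundedness among the basins of the \emph{non-zero} roots, which the rotation permutes; it fixes $\mathcal{A}_0$, and your real-axis argument cannot reach it: for $p(z)=z(z^2-1)$ the set $\mathcal{A}_0\cap\mathbb{R}$ is just the bounded interval $(-1/\sqrt{3},1/\sqrt{3})$ between the two extraneous fixed points, and similarly for $z^2(z^2-1)$. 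Likewise the free critical points are not real in these cases ($\pm i/\sqrt{6}$ for $z(z^2-1)$, a conjugate quadruple for $z^2(z^2-1)$, a conjugate pair plus one negative real point for $z(z^3-1)$), so ``real-axis monotonicity combined with the symmetry action'' cannot show they converge to a root; without locating them you obtain neither convergence (iii), nor unboundedness of $\mathcal{A}_0$ (ii), nor even the simple connectivity of the basins of the non-zero roots that your Riemann--Hurwitz count presupposes (one must first exclude the free critical points from those basins, which the paper does via the invariance of the imaginary axis in the cubic case and via the relation $\sigma(c)=\bar{c}$ for $z(z^3-1)$). The missing device is the paper's analysis on the invariant imaginary axis, $H_p(iy)=i\varphi(y)$ resp. $iQ(y)$, where a contraction/monotonicity argument puts the whole imaginary axis, and hence the non-real free critical points, inside $\mathcal{A}_0$; for $z(z^3-1)$, where a free critical point sits on the negative real axis so Lemma~\ref{gen_dyn} does not apply there, one instead tracks the critical value $c_r^*=-c_r/5\in(0,e_r)$ to get $(-\infty,e_r)\subset\mathcal{A}_0$.

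The second gap is in (iv). You invoke Theorem~\ref{CAOT} in every case, but its hypothesis requires $m\geq 2$ in the representation $\gamma z^{m}P_1(z^{n_1})/P_2(z^{n_2})$. For the unicritical polynomials and for $p(z)=z^2(z^2-1)$ one has $H_p(z)=z\,P_1(z^{n})/P_2(z^{n})$ with $m=1$ (the fixed point $0$ of $H_p$ is not superattracting in those cases), so Theorem~\ref{CAOT} simply does not apply; it works only where the paper uses it, namely for $z(z^2-1)$ and $z(z^3-1)$. In the two excluded cases the paper argues differently: all critical points lie in the Fatou set, so $H_p$ is geometrically finite, the connected Julia set is locally connected by Tan--Yin, Lemma~\ref{bdd_FC} shows that the immediate basins of the roots are the only unbounded Fatou components, and the symmetry group is then read off from how an isometry must permute these basins. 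Finally, your justification ``at least three distinct roots or two roots of unequal multiplicity'' fails for the unicritical quadratic $z^2+a$: there $\mathcal{J}(H_p)$ \emph{is} a line by Theorem~\ref{exact2} and the two symmetry groups differ, so this boundary case must be excluded or treated separately rather than fed into Theorem~\ref{CAOT}.
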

A rational function is said to be \textit{geometrically finite} if the forward orbit of each critical point lying on the Julia set is finite. In \cite[Theorem A]{Lei1996}, Tan and Yin proved that for a geometrically finite map $R$, the Julia set is locally connected whenever $\mathcal{J}(R)$ is connected. To bring clarity in presentation the proof of Theorem~\ref{m1} is partitioned into different subsections.
\subsection{Unicritical polynomial}
\textbf{\textit{Proof of Theorem~\ref{m1}(1):}} Consider a normalized unicritical polynomial of the form $z^n+a$, where $a\in \mathbb{C}$. As we require a polynomial having at least two distinct roots, we may assume that $a\neq 0$. Then, using the Scaling property we consider $a=-1$. Therefore
$$H_p(z)=\frac{z((n-1)z^n+(n+1))}{(n+1)z^n+(n-1)} \qquad\text{and}\qquad H_p'(z)=\frac{(n^2-1)(z^n-1)^2}{((n+1)z^n+(n-1))^2}.$$
Thus, as the critical points of $H_p$ are the roots of $p$, there is no free critical point. Therefore each immediate basin is simply connected, and thus the Julia set is connected.
\par
Again, note that $H_p$ does not have any fixed point or critical point or pole in the interval $(1,\infty)$, and $H_p(x)<x$ for all $x\in (1,\infty)$. Thus, by Lemma~\ref{gen_dyn}, we get $(1,\infty)\subset \mathcal{A}_1$. This proves that $\mathcal{A}_1$ is unbounded. As the Julia set $\mathcal{J}(H_p)$ is preserved under rotations about the origin of order $n$, we conclude that every immediate basin is unbounded. Moreover, the Fatou set $\mathcal{F}(H_p)$ consists of the basins corresponding to the roots of $p$. Hence $H_p$ is convergent. Thus we conclude the assertions (i), (ii) and (iii) of Theorem~\ref{m1} for the case $1$.
\par
To prove (iv), we note that all the critical points of $H_p$ are in $\mathcal{F}(H_p)$. Therefore, $H_p$ is a geometrically finite map with connected Julia set. Hence $\mathcal{J}(H_p)$ is locally connected. Thus, from Lemma~\ref{bdd_FC}, we conclude that the immediate basins corresponding to the roots of $p$ are the only unbounded Fatou components of $H_p$. As these components are preserved by the rotations about the origin of order $n$, equality in the symmetry groups of $p$ and $H_p$ is satisfied.
\par
Figure~\ref{JS-1}(a) demonstrates the Fatou and Julia sets of $H_p$ applied to $p(z)=z^3-1$. The yellow region represents the basin of $1$, whereas the other two differently colored regions (green and blue) represent the basins of $\omega$ and $\omega^2$, respectively.
\begin{figure}[h!]
	\begin{subfigure}{.6\textwidth}
		\centering
		\includegraphics[width=0.78\linewidth]{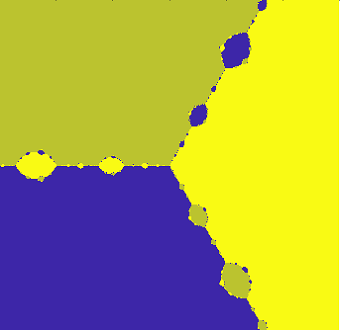}
		\caption{The Julia set of $H_p$, where $p(z)=z^3-1$}
	\end{subfigure}
	\hspace{-2.0cm}
	\begin{subfigure}{.6\textwidth}
		\centering
		\includegraphics[width=0.78\linewidth]{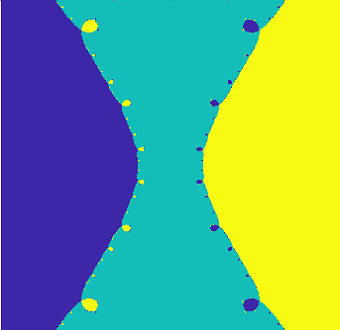}
		\caption{The Julia set of $H_p$, where $p(z)=z(z^2-1)$}
	\end{subfigure}
	\caption{The Julia set is the boundary of any two differently colored regions}
	\label{JS-1}
\end{figure}
\subsection{Cubic polynomial}
\textbf{\textit{Proof of Theorem~\ref{m1}(2):}} A cubic and normalized polynomial $p$ is of the form $z^3+a_1z+a_0$, where $a_0,a_1 \in \mathbb{C}$. If $p$ is not a monomial and $\Sigma p$ is non-trivial, then one of $a_0$ and $a_1$ is zero. First we consider $a_0=0$.
\begin{enumerate}[(I)]
\item Using the Scaling property, we consider $p(z)=z\left(z^{2}-1\right)$. Then Halley's method is
\begin{equation}\label{H_1n2}
H_{p}(z)=\frac{z^{3}\left(3 z^{2}+1\right)}{6 z^{4}-3 z^{2}+1} .
\end{equation}
Therefore, we have $$H_{p}^{\prime}(z)=\frac{3 z^{2}\left(z^{2}-1\right)^{2}\left(6 z^{2}+1\right)}{\left(6 z^{4}-3 z^{2}+1\right)^{2}}.$$
The free critical points of $H_p$ are $ \pm \frac{i}{\sqrt{6}}$, which are purely imaginary.
The poles of $H_p$ are $ \pm \sqrt{\frac{1}{12}(3 \pm i \sqrt{15})}$, which are neither real nor purely imaginary. The extraneous fixed points are $z_{-}=- \frac{1}{\sqrt{3}}$ and $z_{+}=\frac{1}{\sqrt{3}}$, and these are real.
Note that
\begin{equation}\label{phi_1n2}
H_{p}(i y)=\frac{i y^{3}\left(3 y^{2}-1\right)}{6 y^{4}+3 y^{2}+1}
=i\varphi(y) \text{ (say).}
\end{equation}
This shows that the imaginary axis is preserved under the iterations of $H_{p}$. As the free critical points are purely imaginary, they cannot converge to $\pm 1$.

Thus, there is exactly one critical point in each of $\mathcal{A}_{1}$ and $\mathcal{A}_{-1}$, and consequently, both $\mathcal{A}_{\pm 1}$ are simply connected.
\par
Note that $H_{p}'(x)>0$, for all
$ x \in \mathbb{R} \backslash\{0\} .
$ Moreover, $$H_{p}(x)-x=-\frac{x\left(x^{2}-1\right)\left(3 x^{2}-1\right)}{6 x^{4}-3 x^{2}+1}.$$
Therefore, $H_{p}(x)>x$ on $x \in(-\infty,-1) \cup\left(z_-, 0\right) \cup\left(z_+, 1\right)$
and $H_{p}(x)<x$ on $x \in\left(-1,z_-\right) \cup\left(0, z_+\right) \cup(1, \infty)$ (see Figure~\ref{Grh-1}(a)). It follows that
$ (-\infty,-1) \subset \mathcal{A}_{1} $, $\left(z_-,z_+\right) \subset \mathcal{A}_{0} $ and $ (1, \infty) \subset \mathcal{A}_{1}.$
This proves that $\mathcal{A}_{ \pm 1}$ are unbounded.
Therefore $\partial \mathcal{A}_{ \pm 1}$ contain at least one pole.
As $\mathcal{J}\left(H_{p}\right)$ is symmetric about both the axes, all poles are on $\partial \mathcal{A}_{1} \cup \partial \mathcal{A}_{-1}$.
As both the immediate basins are simply connected, all poles are lying on an unbounded Julia component. Hence $\mathcal{J}(H_p)$ is connected.
\par
To prove $\mathcal{A}_0$ is unbounded, we claim that the imaginary axis is in $\mathcal{A}_0$. This will prove consequently that $H_p$ is convergent. Recall the function $\varphi(y)=\frac{y^{3}\left(3 y^{2}-1\right)}{6 y^{4}+3 y^{2}+1}$ so that $\varphi^{\prime}(y)=\frac{3 y^{2}\left(y^{2}-1\right)\left(6 y^{2}-1\right)}{\left(6 y^{4}+3 y^{2}+1\right)^{2}}$.
We get that $0<\varphi^{\prime}(y)<1$ whenever $y \in(-\frac{1}{\sqrt{6}}, \frac{1}{\sqrt{6}})$.
Let $c_{+}=\frac{1}{\sqrt{6}}$ and $c_{-}=-\frac{1}{\sqrt{6}}$.
Then $\left(H_{p}\left(c_{+}\right), H_{p}\left(c_{-}\right)\right) \subset\left(c_{-}, c_{+}\right)$.
By the Contraction principle, we get that
$
\lim\limits_{n \rightarrow \infty} H_{p}^{n}\left(c_{+}\right)=0=\lim \limits_{n \rightarrow \infty} H_{p}^{n}\left(c_{-}\right) .
$
Note that $\varphi$ has two zeros, $z_{+}$ and $z_{-}$.
Consequently, we get that $ \lim \limits_{n \to \infty} H_{p}^{n}(y)=0$ for every $y \in\left(z_{-}, z_{+}\right)$ (see Figure~\ref{Grh-1}(b)).
Now for every point $\tilde{y}$ in $\left(-\infty, z_{-}\right)$ there is a natural number $ k$, smallest, such that $H_{p}^{k}(\tilde{y}) \in (z_{-1}, 0]$. Therefore, we get that
$\lim\limits _{n \to \infty} H_{p}(y)=0$ for all $ y \in(-\infty, 0]$, and hence, by symmetry, this holds for all $y$ in $(-\infty, \infty)$.
This shows that the imaginary axis, and thus all the free critical points, are contained in $\mathcal{A}_0$. Hence, there is no Fatou component other than the basins of $-1$, $0$ and $1$. Hence $H_p$ is convergent.
\par
We use Theorem~\ref{CAOT} to prove the equality between $\Sigma p$ and $\Sigma H_p$. Note that $H_p$ is of the form given in Equation~(\ref{form_coat}) and $n=2$. Moreover, as $H_p$ is convergent, there is no parabolic or rotation domain. Therefore, Theorem~\ref{CAOT} gives that $\Sigma H_p=\{Id, z\mapsto -z\}$, which is same as $\Sigma p$.
\begin{figure}[h!]
	\begin{subfigure}{.6\textwidth}
		\centering
		\includegraphics[width=0.80\linewidth]{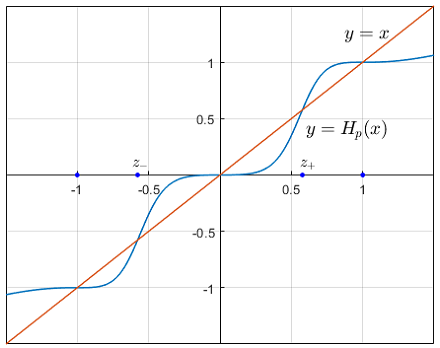}
		\caption{The graph of $H_p(x)$ for $x\in \mathbb{R}$}
	\end{subfigure}
	\hspace{-2.0cm}
	\begin{subfigure}{.6\textwidth}
		\centering
		\includegraphics[width=0.80\linewidth]{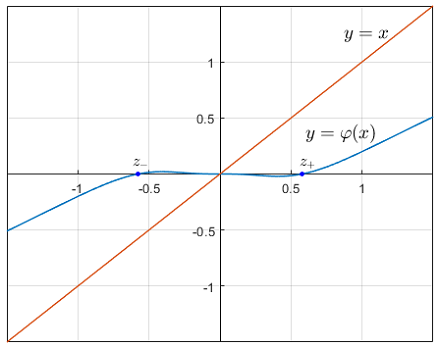}
		\caption{The graph of $\varphi(x)$ for $x\in \mathbb{R}$}
	\end{subfigure}
	\caption{The graphs of $H_p$ and $\varphi$ for $x\in \mathbb{R}$ as defined in Equations~(\ref{H_1n2}) and (\ref{phi_1n2}), respectively}
	\label{Grh-1}
\end{figure}
\par
Figure~\ref{JS-1}(b) portraits the dynamical plane of $H_p$ for $p(z)=z(z^2-1)$, where it can be seen that $\mathcal{J}(H_p)$ is preserved by the second order rotation about the origin.
\item If $a_1=0$ then $p$ is a unicritical polynomial, which is dealt in the previous sub-section.
\end{enumerate}
\subsection{Quartic polynomial}
\textbf{\textit{Proof of Theorem~\ref{m1}(3):}}	A normalized quartic polynomial is of the form $z^4+a_2z^2+a_1z+a_0$, where $a_i \in \mathbb{C}$ for $i=0,1,2$. If the origin is a root of this polynomial, then $a_0=0$. If there is a non-identity element in the symmetry group, then we have the following two cases: $z^4+a_1z$ and $z^4+a_2z^2$.
\begin{enumerate}
\item[(I)] First we consider the case when the polynomial is of the form $z^4+a_1z$. Then using the Scaling property we consider $p(z)=z\left(z^{3}-1\right)$. The corresponding Halley's method and its derivative are respectively given by
\begin{equation}
H_{p}(z)=z-\frac{z\left(z^{3}-1\right)\left(4 z^{3}-1\right)}{10 z^{6}-2 z^{3}+1}=\frac{3 z^{4}\left(2 z^{3}+1\right)}{10 z^{6}-2 z^{3}+1}\label{H_1n3}, 
\end{equation}
and
$$H_{p}^{\prime}(z)=\frac{12 z^{3}\left(z^{3}-1\right)^{2}\left(5 z^{3}+1\right)}{\left(10 z^{6}-2 z^{3}+1\right)^{2}}.$$
The solutions of $z^{3}=\frac{1}{4}$ are the extraneous fixed points of $H_p$, one of them is in the positive real axis, say $e_r$ and the other two are complex conjugates, say $e$ and $\bar{e}$. The
free critical points are the solutions of $z^{3}=-\frac{1}{5}$, among them, one is a negative real number, say $c_r$, while the other two, say $c$ and $\bar{c}$, are complex conjugates.
All poles of $H_p$ are non-real as these are the solutions of $z^{3}=\frac{1}{10}(1 \pm 3 i)$.
\par
We claim that $\cup_{i=1}^{3}\mathcal{A}_{i}$ does not contain a free critical point. As $H_p^m(c_r)\in \mathbb{R}$ for every $m\in \mathbb{N}$, if $c_r$ converges to a root of $p$, it should be either $0$ or $1$. However, as the Julia set $\mathcal{J}(H_p)$ is preserved by the rotations about the origin of order three, the immediate basin $\mathcal{A}_1$ cannot contain a negative real number. Hence $c_r\notin \mathcal{A}_{1}$. If $c\in \mathcal{A}_1$ then the reflection about the real axis asserts that $\bar{c}\in \mathcal{A}_1$. Again, note that $\sigma(\mathcal{A}_1)=\mathcal{A}_2$, where $\sigma(z)=e^{\frac{2\pi i}{3}}z$. Thus, if $c\in \mathcal{A}_1$ then $\sigma(c)=\bar{c}$ should be in $\mathcal{A}_2$, which is a contradiction. Thus, $\mathcal{A}_1$ does not contain a free critical point. Therefore, by symmetry, we conclude that free critical points do not lie on $\mathcal{A}_i$ , for every $i=1,2,3$. Thus, each of these domains contains exactly one critical point, and hence, these are simply connected.
\par
As $H_{p}^{\prime}(x)>0$ on the positive real axis, we get that
$H_{p}(x)$ is strictly increasing in $(1, \infty)$. Moreover, for every $x \in(1, \infty)$ we have $H_{p}(x)<x$ (see Figure \ref{Grh-2}(a)). It gives that $(1, \infty) \subset \mathcal{A}_{1}$. Therefore $\mathcal{A}_{1}$ is unbounded, and hence, $\partial \mathcal{A}_{1}$ contains at least one pole. As poles are non-real, and $\mathcal{J}\left(H_{p}\right)$ is symmetric about the $x$-axis as well as it is invariant under rotation about the origin of order three, all poles are lying on $\cup\partial\mathcal{A}_{i}$, where $i=1,2,3$. As every $\mathcal{A}_i$ is simply connected, all poles are lying on an unbounded Julia component. Hence,
$\mathcal{J}(H_{p})$ is connected.
\par
Consider the interval $(0,e_r)$. Then it $p$ has no critical point and $H_{p}(x)<x$ for all $x \in (0,e_r)$. Therefore, we get that $(0, e_{r}) \subset \mathcal{A}_{0} $. The critical value $c_r^*$ corresponding to the real free critical point $c_r$ is $c_{r}^*=-\frac{c_{r}}{5}$. Then we get $ 0<c_{r}^*<e_r$, and therefore, $H_{p}((-\infty, 0))=\left(-\infty, c_{r}^{*}\right)$.
Then for any $x \in(-\infty, 0)$, there exists $n_{x} \in \mathbb{N}$ such that $H_{p}^{n_x}(x)=0$ or $H_{p}^{n}(x) \in\left(0, c_{r}^{*}\right)$ for all $n \geq n_{x}$. Hence we have $(-\infty, e_r) \subset \mathcal{A}_{0}$. This proves that $\mathcal{A}_{0}$ is unbounded, and $c_r^*$ is in the immediate basin of $0$. Note that, $\mathcal{A}_0$ is preserved under the rotation about the origin of order $3$. Thus, all the free critical points are in $\mathcal{A}_0$. Hence, $H_p$ is convergent.
\par
The equality in $\Sigma p$ and $\Sigma H_p$ follows similarly as in the proof of Theorem~\ref{m1}(2) (see part (I)).
\par
The graph of $H_p$ on the real line, for $p(z)=z(z^3-1)$, is demonstrated in Figure~\ref{Grh-2}(a), where the red dot is the real free critical point $c_r$, the yellow dot is the critical value $c_r^*$, whereas the blue dots are the real non-zero fixed points of $H_p$. In Figure~\ref{JS-2}(a) equality in symmetry groups of $p$ and $H_p$ can be visualized.
\begin{figure}[h!]
	\begin{subfigure}{.6\textwidth}
		\centering
		\includegraphics[width=0.80\linewidth]{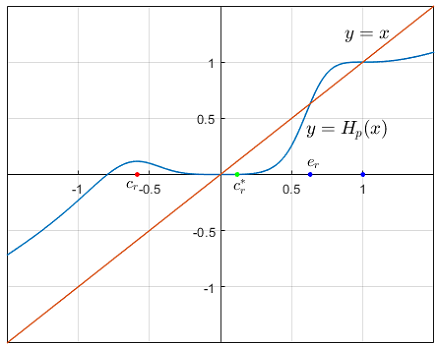}
		\caption{The graph of $H_p(x)$ for $x\in \mathbb{R}$}
	\end{subfigure}
	\hspace{-2.0cm}
	\begin{subfigure}{.6\textwidth}
		\centering
		\includegraphics[width=0.80\linewidth]{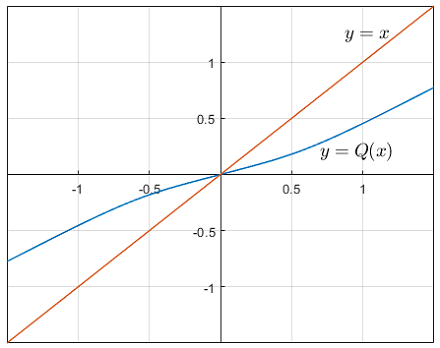}
		\caption{The graph of $Q(x)$ for $x\in \mathbb{R}$}
	\end{subfigure}
	\caption{The graphs $H_p$ and $Q$ for $x\in \mathbb{R}$ as defined in Equations~(\ref{H_1n3}) and (\ref{Q_form}), respectively}
	\label{Grh-2}
\end{figure}
\item[(II)] Now we consider the polynomial $z^4+a_2z^2$. Again using the Scaling property we consider $p(z)=z^2(z^2-1)$. Then $$H_{p}(z)=\frac{z\left(6 z^{4}-3 z^{2}+1\right)}{10 z^{4}-9 z^{2}+3}\qquad \text{and}\qquad H_{p}^{\prime}(z)=\frac{3\left(z^{2}- 1\right)^{2}\left(20 z^{4}-4 z^{2}+1\right)}{\left(10 z^{4}-9 z^{2}+3\right)^{2}}.$$
The poles of $H_p$ are $\pm \sqrt{\frac{1}{20}(9 \pm i \sqrt{39})}$ and the free critical points are $ \pm \sqrt{\frac{1}{10}(1 \pm 2 i)}$. As $\mathcal{J}(H_p)$ is symmetric under the reflections about both the axes, and free critical points are of the form $c, \bar{c}, -c $ and $-\bar{c}$, all the free critical points are in $\mathcal{A}_0$. Therefore, $\pm 1$ are the only critical points present in $\mathcal{A}_{ \pm 1}$ respectively, and hence, these are simply connected. It is easy to verify that $\mathcal{A}_{ \pm 1}$ are unbounded, and hence by symmetry, all the poles are on $\cup \partial \mathcal{A}_{ \pm 1}$. Therefore, we conclude that $\mathcal{J}(H_{p})$ is connected and the Fatou set is the union of basins of the roots of $p$. Hence $H_p$ is convergent.

Now we prove that $\mathcal{A}_0$ is unbounded. For $y\in \mathbb{R}$, we have
\begin{equation}\label{Q_form}
H_{p}(i y)=\frac{i y\left(6 y^{4}+3 y^{2}+1\right)}{10 y^{4}+9 y^{2}+3}=i Q(y).
\end{equation} Note that, the function $Q$ does not have any real critical point or real fixed point other than $0$. Moreover, $$Q(y)-y=-\frac{2y(y^2+1)(2y^2+1)}{10y^4+9y^2+3}.$$ Therefore, for any $y\in (0,\infty)$ we have $Q(y)<y$ (see Figure~\ref{Grh-2}(b)), and hence, from Lemma~\ref{gen_dyn} we get that $\lim\limits_{n\to \infty} Q^n(y)=0$. By symmetry we have $\lim\limits_{n\to \infty} Q^n(\tilde{y})=0$ for any $\tilde{y}\in (-\infty,0)$. This proves that the imaginary axis is in $\mathcal{A}_{0}$.
\par
Similar to the proof of Theorem~\ref{m1}(1), only unbounded components of $\mathcal{F}(H_p)$ are the immediate basins corresponding to the roots of $p$. Again, note that $\sigma(\mathcal{A}_0)=\mathcal{A}_0$ for every $\sigma \in \Sigma H_p$. Thus the immediate basin corresponding to a non-zero root of $p$ does not map to $\mathcal{A}_0$ under any $\sigma\in \Sigma H_p$. Moreover, the immediate basins $\mathcal{A}_{\pm 1}$ are symmetric to each other under the reflection about the imaginary axis. Hence, $\Sigma H_p=\{Id, z\mapsto -z\}=\Sigma p$, and this concludes the proof (see Figure~\ref{JS-2}(b)).
\begin{figure}[h!]
	\begin{subfigure}{.6\textwidth}
		\centering
		\includegraphics[width=0.78\linewidth]{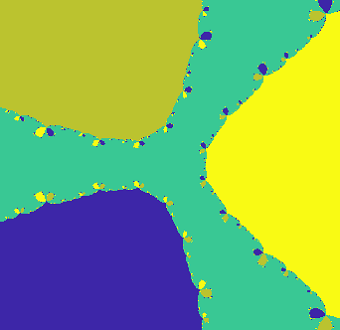}
		\caption{The Julia set  $\mathcal{J}(H_p)$ for $p(z)=z(z^3-1)$}
	\end{subfigure}
	\hspace{-2.0cm}
	\begin{subfigure}{.6\textwidth}
		\centering
		\includegraphics[width=0.78\linewidth]{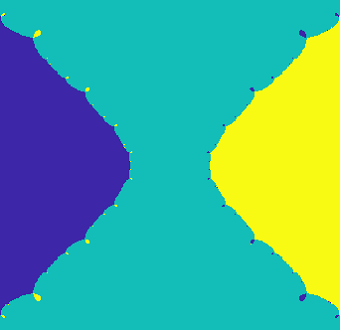}
		\caption{The Julia set  $\mathcal{J}(H_p)$ for $p(z)=z^2(z^2-1)$}
	\end{subfigure}
	\caption{The respective Julia set of $H_p$ is connected and immediate basins are unbounded}
	\label{JS-2}
\end{figure}
\end{enumerate}
\section{An extension to higher degree polynomials}\label{Sect-4}
The immediate possible extension of Theorem~\ref{m1} is to study the dynamics of $H_p$ for $p(z)=z(z^n-1)$, $n\geq 2$. In that theorem, the study has been done for $n=2,3$. Here is an attempt to extend Theorem~\ref{m1}, at least for partially.

\begin{maintheorem}\label{extn_gen}
For the polynomial $p(z)=z\left(z^{n}+a\right)$, where $a \in \mathbb{C} \backslash\{0\}$,
\begin{enumerate}
	\item the Julia set $ \mathcal{J}(H_p) $ is connected
	\item $H_p$ does not have any rotation domain
	\item $\Sigma p=\Sigma H_ p$
    \item each of the immediate basin of non-zero roots of $p$ is unbounded, but the immediate basin of $ 0 $ for $  H_p $ is bounded when $n\geq 7$.
\end{enumerate}
\end{maintheorem}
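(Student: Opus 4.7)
By the Scaling property I reduce to $p(z)=z(z^n-1)$. A direct computation yields
$$
H_p(z)=\frac{n\,z^{n+1}[(n+1)z^n+(n-1)]}{(n+1)(n+2)z^{2n}+(n+1)(n-4)z^n+2},
$$
and the identity $H_p'=p^{2}[3(p'')^{2}-2p'p''']/[2(p')^{2}-pp'']^{2}$ gives
$$
H_p'(z)=\frac{n(n+1)\,z^n(z^n-1)^2[(n+1)(n+2)z^n+2(n-1)]}{[(n+1)(n+2)z^{2n}+(n+1)(n-4)z^n+2]^2}.
$$
Thus $H_p$ is equivariant under $\sigma\colon z\mapsto e^{2\pi i/n}z$ and under complex conjugation; the $2n$ rays $\arg z=k\pi/n$ are forward invariant; the $n$ free critical points lie on the between-root rays at radial distance $\rho_n=[2(n-1)/((n+1)(n+2))]^{1/n}$; and the poles are the solutions in $w=z^n$ of the quadratic $(n+1)(n+2)w^{2}+(n+1)(n-4)w+2=0$.

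\textbf{Parts (1)--(3).} Each $\mathcal{A}_{\zeta_k}$ is the invariant immediate basin of the super-attracting fixed point $\zeta_k$, which is itself a critical point of $H_p$ of multiplicity $2$; so in the Riemann--Hurwitz identity $(c-2)(1-d)=m$ applied to $H_p\colon \mathcal{A}_{\zeta_k}\to\mathcal{A}_{\zeta_k}$ we have $m\geq 2$, forcing $c=1$ and $d=m+1\geq 3$. Hence every $\mathcal{A}_{\zeta_k}$ is simply connected. Lemma~\ref{gen_dyn} applied on $(1,\infty)$ gives $(1,\infty)\subset\mathcal{A}_1$, and $\sigma$-symmetry makes every $\mathcal{A}_{\zeta_k}$ unbounded. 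Lemma~\ref{bdry_pole} provides a pole on each $\partial\mathcal{A}_{\zeta_k}$; together with $\sigma$- and reflection-symmetry this forces all $2n$ poles onto $\bigcup_k\partial\mathcal{A}_{\zeta_k}$, which is an unbounded Julia continuum, and Lemma~\ref{all_pole} gives (1). For (2), the root-critical orbits terminate at attracting roots, and any rotation-domain cycle would force a free-critical orbit to accumulate on a $\sigma$-symmetric family of Jordan curves; a case check on the invariant ray system rules this out. Part (3) is then Theorem~\ref{CAOT}: $H_p=\gamma z^{n+1}P_1(z^n)/P_2(z^n)$ with $m=n+1\geq 2$ and $\gcd\{n_1,n_2\}=n\geq 2$, and by (2) there is no parabolic or rotation domain, so $\Sigma H_p=\{z\mapsto\lambda z:\lambda^n=1\}=\Sigma p$.

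\textbf{Part (4).} Unboundedness of the $\mathcal{A}_{\zeta_k}$ is already in (1). For the boundedness of $\mathcal{A}_0$ when $n\geq 7$ the key structural fact is the discriminant
$$
\Delta_n=(n+1)^2(n-4)^2-8(n+1)(n+2),
$$
which satisfies $\Delta_n<0$ for $n\leq 6$, $\Delta_7=0$, and $\Delta_n>0$ for $n\geq 8$; a Vieta sign-check shows that the real roots of the pole quadratic, when they exist, are both negative. Consequently, \emph{precisely} for $n\geq 7$ each between-root ray carries real $H_p$-poles (one double pole at $\rho_7=(1/6)^{1/7}$ when $n=7$; two simple poles at radii $r_1<r_2$ when $n\geq 8$), all inside the unit disk and hence in $\mathcal{J}(H_p)$. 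Together with the extraneous repelling fixed points at $1/(n+1)^{1/n}$ on the root rays, this gives $2n$ inner Julia points, one on each of the $2n$ invariant rays through $0$. My plan is to bootstrap this symmetric set into a closed Julia continuum around $0$ by tracking iterated $H_p$-preimages of short arcs joining consecutive inner Julia points across adjacent invariant sectors.

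The main obstacle is converting the ``$\mathcal{A}_0$ is bounded on every invariant ray'' data into an actual topological enclosure. The intended route is to establish local connectivity of $\mathcal{J}(H_p)$---automatic for $n=7$ where $H_p$ is geometrically finite because every free critical point coincides with a pole and has orbit $\{\infty\}$, and by a separate forward-orbit argument along the invariant rays for $n\geq 8$---and then apply prime-end theory to the simply-connected $\mathcal{A}_0$. An accessible prime end at $\infty$ would determine an unbounded simple arc in $\mathcal{A}_0$ which, by $\sigma$- and reflection-symmetry together with the $D_n$-cage of inner Julia points, would have to cross one of those points, a contradiction. The case $n=7$ needs particular care because the coincidence of free critical points with double poles changes the degree of $H_p|_{\mathcal{A}_0}$ compared to generic $n\geq 8$.
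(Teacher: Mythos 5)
Your outline reproduces the correct formulas and the overall strategy (scaling, symmetry, ray analysis), but at the decisive steps it either asserts what needs proof or uses an argument that fails. In (1), the Riemann--Hurwitz computation $(c-2)(1-d)=m$ with $m\geq 2$ only excludes \emph{finite} connectivities $c\geq 2$; an immediate basin of a superattracting fixed point can perfectly well be infinitely connected (e.g.\ the basin of $\infty$ for $z^{3}+c$ with $|c|$ large, where the local degree at the fixed point is $3$), and in that case the basin simply contains additional critical points. So you must first show that the free critical points, the solutions of $z^{n}=-\tfrac{2(n-1)}{(n+1)(n+2)}$, do not lie in any $\mathcal{A}_{\zeta_k}$ --- this is the rotation-plus-conjugation argument the paper imports from Theorem~\ref{m1}(2),(3) --- and only then deduce simple connectivity from the fact that each $\mathcal{A}_{\zeta_k}$ contains exactly one critical point. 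Your blanket claim that $\sigma$- and reflection-symmetry put all $2n$ poles on $\bigcup_k\partial\mathcal{A}_{\zeta_k}$ also glosses over the case split that matters: for $n\geq 7$ both pole families lie on the between-root rays (for odd $n$, on the negative real axis), and the paper in fact proves that $\mathcal{A}_1$ \emph{misses} the negative real axis; for odd $n>7$ it therefore abandons the all-poles route and instead shows $e_r=(n+1)^{-1/n}\in\partial\mathcal{A}_1$ (via Lemma~\ref{gen_dyn} on $(e_r,1)$) and applies Lemma~\ref{connected_J1} to the repelling fixed points. In (2), ``a case check on the invariant ray system rules this out'' is not an argument; the paper's proof uses Milnor's theorem that the boundary of a Siegel disk lies in the closure of the postcritical set, which here is contained in finitely many lines through the origin, so the boundary would have to contain $0\in\mathcal{F}(H_p)$. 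In (3) you feed Theorem~\ref{CAOT} the hypothesis ``no parabolic or rotation domain by (2)'', but (2) says nothing about parabolic \emph{cycles} of period at least two (only fixed points are known to be attracting or repelling); the paper devotes a separate argument (geometric finiteness, local connectivity, Lemma~\ref{bdd_FC}, and the real free critical orbit) to the possible parabolic case.

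Part (4) is explicitly a plan rather than a proof, and the mechanism you sketch cannot work as stated: your ``$D_n$-cage'' is a \emph{finite} set of Julia points (poles and extraneous fixed points on the invariant rays), and a finite set never separates $0$ from $\infty$, so an unbounded arc in $\mathcal{A}_0$ is in no way forced to pass through one of them; moreover simple connectivity of $\mathcal{A}_0$, local connectivity of $\mathcal{J}(H_p)$ for $n\geq 8$, and the prime-end step are all left unestablished. What is actually needed --- and what the paper constructs --- is a \emph{bounded Julia continuum} surrounding the origin: one first shows $\mathcal{A}_1$ cannot meet the negative real axis (an arc in $\mathcal{A}_1$ from a negative real point to $1$, together with its reflection, would enclose $0$, and its rotation would give a loop in $\mathcal{A}_2$ also enclosing $0$, which is impossible); hence the pole on $\partial\mathcal{A}_1$ guaranteed by Lemma~\ref{bdry_pole} is a non-real $\xi^{*}$ with $\bar{\xi}^{*}\in\partial\mathcal{A}_1$ as well, and the bounded boundary arc $\Gamma_1\subset\partial\mathcal{A}_1$ through $\bar{\xi}^{*}$, $e_r$, $\xi^{*}$ and its rotations $\Gamma_2,\dots,\Gamma_n$ link up at shared poles, so that $\bigcup_{i=1}^{n}\Gamma_i$ is a bounded subset of $\mathcal{J}(H_p)$ surrounding $0$, which bounds $\mathcal{A}_0$. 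Nothing in your outline produces such a continuum, so the boundedness claim for $n\geq 7$ remains unproved.
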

\begin{proof}
Using the Scaling property we consider $
p(z)=z(z^{n}-1)$. Then
\begin{equation}
H_{p}(z)=\frac{n z^{n+1}\left((n+1) z^{n}+(n-1)\right)}{(n+2)(n+1) z^{2 n}+(n+1)(n-4) z^{n}+2}\label{H_ext}
\end{equation}
and
$$
H_{p}^{\prime}(z)=\frac{n(n+1) z^{n}\left(z^{n}-1\right)^{2}\left((n+1)(n+2) z^{n}+2(n-1)\right)}{\left((n+2)(n+1) z^{2 n}+(n+1)(n-4) z^{n}+2\right)^{2}}.$$
We denote the immediate basins corresponding to the roots of $p$ by $\mathcal{A}_i$, $i=0,1,\dots, n$, where $\mathcal{A}_0$ and $\mathcal{A}_1$ are the immediate basins of $0$ and $1$, respectively, and $\mathcal{A}_{j}=\sigma^{j-1}(\mathcal{A}_1)$, for $j=2,3,\dots, n$ and $\sigma(z)=e^{\frac{2\pi i}{n}}z$.
\begin{enumerate}
\item The free critical points are the solutions of $$z^{n}=-\frac{2(n-1)}{(n+1)(n+2)}.$$
Hence whenever $n$ is even, there are two purely imaginary critical points and no real critical point, whereas, when $n$ is odd, there is exactly one real free critical point lying on the negative real axis. Using the similar argument used in the proof of Theorem~\ref{m1}(2) and (3) (see part (I) of each) we conclude that free
critical points are not in $\cup_{i=1}^{n} \mathcal{A}_i $. Therefore, there is exactly one critical point in $\mathcal{A}_{i}$, for every $i=1,2$, $\dots, n$. Hence, these Fatou components are simply connected.
\par
In order to prove that $ \mathcal{J} (H_p) $ is connected, we show that there is an unbounded Julia component containing all the poles or all the repelling fixed points.

The poles of $H_p$ are the solutions of
$$
z^{n}=\frac{-(n-4)(n+1) \pm n \sqrt{(n-7)(n+1)}}{2(n+1)(n+2)}.
$$
Note that for $n$ even, $H_p$ does not have a real pole. If $n=7$ then
$$
H_{p}(z)=\frac{7 z^{8}\left(4 z^{7}+3\right)}{\left(6 z^{7}+1\right)^{2}}.
$$
In this case, there is exactly one real pole, say $\xi_1$ (see Figure~\ref{Grh-3}(a)). Moreover, the multiplicity of each pole is $2$, and therefore, they coincide with free critical points. Hence all free critical points are in $ \mathcal{J} (H_p) $.
Whenever $n$ is even or $n<7$, all the poles are non-real.
In both the cases, the interval $(1, \infty)$ does not contain any fixed point, critical point or pole of $ H_p $. Moreover, $H_{p}(x)<x$ for all $x$ in $(1, \infty)$. Hence, by Lemma~\ref{gen_dyn}, we get $(1, \infty) \subset \mathcal{A}_{1}$. Thus, $\mathcal{A}_{1}$ is unbounded.
By the symmetry of the Julia set about the real axis as well as the rotation about the origin of order $n$ gives that all poles are in $\bigcup_{i=1}^{n} \partial \mathcal{A}_{i}$. As $\mathcal{A}_i$ is simply connected for all $i=1,2,\dots, n$, there is an unbounded Julia component containing all the poles. Therefore, $ \mathcal{J}(H_p) $ is connected.
\par
Now let $n$ be odd and $n>7$. In this case, $H_{p}$ has two real poles, both are in the negative real axis (see Figure~\ref{Grh-3}(b)). Therefore, using the same argument used in the previous paragraph, we conclude that $\mathcal{A}_{1}$ is unbounded.
Let $e_r=\sqrt[n]{1/(n+1)}$. Clearly, $p(e_r)\neq0$ and $p'(e_r)=0$.
By Proposition \ref{prop_f.pts}, we know that $e_r$ is a repelling fixed point lying in the Julia set of $H_p$.
Then there is no fixed point, critical point or pole in $ (e_r, 1) $. Moreover, $H_{p}(x)>x$ for all $x \in(e_r, 1)$. It follows from Lemma \ref{gen_dyn} that $e_{r} \in \partial \mathcal{A}_{1}$. As $\mathcal{A}_{i}$ is simply connected and unbounded, there exists an unbounded Julia component containing all the repelling fixed points. Hence $ \mathcal{J} (H_p) $ is connected.
\begin{figure}[h!]
	\begin{subfigure}{.6\textwidth}
		\centering
		\includegraphics[width=0.80\linewidth]{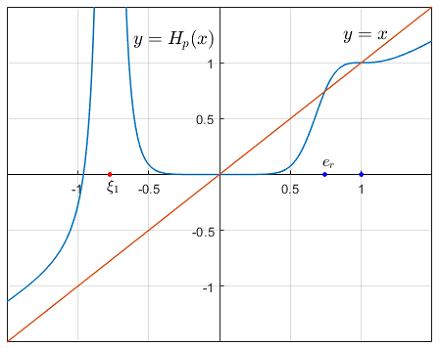}
		\caption{Exactly one real pole for $n=7$}
	\end{subfigure}
	\hspace{-2.0cm}
	\begin{subfigure}{.6\textwidth}
		\centering
		\includegraphics[width=0.80\linewidth]{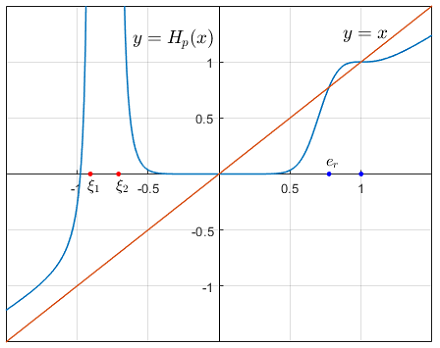}
		\caption{Two real poles for $n=9$}
	\end{subfigure}
	\caption{The graph of $H_p(x)$ for $x\in \mathbb{R}$ as defined in Equation~(\ref{H_ext})}
	\label{Grh-3}
\end{figure}
\item Since the Julia set is connected, $H_p$ does not have a Herman ring. We need to verify the existence of a Siegel disk. Note that an invariant Siegel disk contains a rationally indifferent fixed point. As a fixed point of Halley's method applied to any polynomial is either attracting or repelling, Halley's method applied to a polynomial does not have a Siegel disk. Again, it is known from \cite[Theorem 11.17]{Milnor_book}, that the closure of the postcritical set contains the boundary of a Siegel disk. As $\lambda H_p(z)=H_p(\lambda z)$, where $\lambda^n=1$, the postcritical set is preserved by the rotations about the origin of order $n$. More precisely, the forward orbits of the free critical points of $H_p$ are contained in the straight lines passing through the origin, and the angle of two successive lines is $\frac{2\pi}{n}$. Since the boundary of a Siegel disk is a closed curve, it should contain $0$ and $\infty$. However, the origin is a superattracting fixed point of $H_p$ lying on the Fatou set. This contradicts the existence of a Siegel disk. Hence, $H_p$ does not have a rotation domain.
\item First we consider that $H_p$ does not have a parabolic domain. Thus from Theorem~\ref{CAOT} we get that $\Sigma H_{p}=\left\{z \mapsto \lambda z: \lambda^{n}=1\right\}=\Sigma p.$
\par
As free critical points are preserved by rotations about the origin of order $n$, if there is a parabolic domain then free critical points belong to it.
Hence all critical points are in the Fatou set, and therefore, $H_{p}$ is a geometrically finite map. As the Julia set is connected, it is locally connected.
It follows by Lemma~\ref{bdd_FC} that all the preimages of the immediate basins corresponding to the roots of $p$ are bounded.
Note that, $\left(e_r, \infty\right) \subset \mathcal{A}_{1}$ and $\mathcal{A}_{1}$ is symmetric about the real axis.
Since one of the free critical points is real and the real axis is preserved under the iteration of $H_{p}$, there exist parabolic periodic points on the real axis such that the immediate basin $U$ of these contains the real free critical point.

Therefore, for any $\sigma \in \Sigma H_{p},$ $\sigma (\mathcal{A}_{1})$ does
not map to $U$.
Hence $\sigma\left(\mathcal{A}_{1}\right)$ is mapped to $\mathcal{A}_{i}$, for some $i=1,2, \dots, n$.
This asserts that $\Sigma H_{p}=\left\{z \mapsto \lambda z: \lambda^{n}=1\right\}=\Sigma p.$
\par
\item 
Recall that $\mathcal{A}_1$ is unbounded and $\mathcal{A}_{j}=\sigma^{j-1}(\mathcal{A}_1)$ for $j=2,3,\dots n$ and $\sigma(z)=e^{\frac{2\pi i}{n}}z$. Thus,  each $\mathcal{A}_{j}$ is unbounded for $j\in\{1,2,\dots n\}$.
\par 
Next we will prove that $\mathcal{A}_0$ is bounded when $n\geq7$.
We distinguish two cases. Let us consider the case of odd number $n$.
Recall that $H_p$ has two sets of poles, say $S_1=\{z:z^n=\xi_1^n\}$ and $S_2=\{z:z^n=\xi_2^n\}$, where $\xi_1$ and $\xi_2$ are negative real numbers ($\xi_1=\xi_2$ whenever $n=7$). Note that all elements of $S_1$ and $S_2$ are preserved by the rotation about the origin of order $n$. As the immediate basin $\mathcal{A}_1$ is unbounded, its boundary contains at least one pole. We claim that $\mathcal{A}_1$ cannot meet the
negative real axis. It will follow subsequently that $\partial \mathcal{A}_1$ does not contain a real pole.
\par
Let there be a negative real number $r \in \mathcal{A}_1$. As $\mathcal{A}_1$ is simply connected, there exists a simple arc $\gamma$ joining $r$ and $1$. Note that $\gamma$ cannot lie entirely on the real axis, as both the intervals $(-\infty, r)$ and $(r, 1)$ contain points which are not in $\mathcal{A}_1$. Due to the symmetry of the Julia set $\mathcal{J}(H_p)$ about the real axis, there must exist another arc $\bar{\gamma}$ in $\mathcal{A}_1$ joining $r$ and $1$, which is the reflection of $\gamma$ about the real axis. (Note that $\gamma$ and $\bar{\gamma}$ may have common points if $\gamma$ contains some real values.) Then, some part of the closure of $\gamma \cup \bar{\gamma}$ forms a loop that surrounds the origin. This leads to a contradiction, because $\sigma(\gamma)$ and $\sigma(\bar{\gamma})$ are arcs in $\mathcal{A}_2$ ($ \sigma (z)=e^{\frac{2\pi i}{n}}z $), and the closure of their union would also surround the origin, which cannot happen due to the structure of the immediate basins.
\par
Let $\xi^*$ be a pole lying on the boundary of $\mathcal{A}_1$ such that $\Im(\xi^*)>0$. Then $\bar{\xi}^*\in \partial \mathcal{A}_1$. As $\xi_1$ and $\xi_2$ are real numbers, both of $\xi^*$ and $\bar{\xi}^*$ belong to the same set, either in $S_1$ or in $S_2$. Therefore, we get $\sigma(\bar{\xi}^*)=\xi^*$.
Note that the ray $L_1$ on the real axis joining $e_r$ (the real extraneous fixed point) and $\infty$ lies entirely in $\mathcal{A}_1$. Therefore, the rays $\sigma(L_1)$ and $\sigma^{n-1}(L_1)$ completely lie in $\mathcal{A}_2$ and $\mathcal{A}_n$, respectively. Thus, the boundary of $\mathcal{A}_1$ is enclosed between the rays $\sigma(\mathbb{R}^+)$, $\mathbb{R}^+$ and $\mathbb{R}^+$, $\sigma^{n-1}(\mathbb{R}^+)$, where $\mathbb{R}^+=(0,\infty)$. The boundary $\partial \mathcal{A}_1$ can be expressed as
$
\partial \mathcal{A}_1 = \Gamma_1 \cup \tilde{\Gamma}_1 ,
$
where $\Gamma_1$ contains the points $\xi^*$, $e_r$ and $\bar{\xi}^*$, whereas $\tilde{\Gamma}_1$ contains the points $\bar{\xi}^*$, $\infty$ and $\xi^*$ (i.e., $\Gamma_1$ is a bounded subset of the Julia set of $H_p$).

As $\sigma(\mathcal{A}_1)=\mathcal{A}_2$, its boundary contains $\xi^*$. Similarly, we can find another bounded subset $\Gamma_2$ of the Julia set that contains the points $\xi^*$, $\sigma(e_r)$ and $\sigma(\xi^*)$. Continuing the same way, we can get $\Gamma_n$ that contains $\sigma^{n-2}(\xi^*)$, $\sigma^{n-1}(e_r)$ and $\sigma^{n-1}(\xi^*)=\bar{\xi}^*$. Then $\cup_{i=1}^{n} \Gamma_i$ is a bounded subset of the Julia set that surrounds the origin. Hence $\mathcal{A}_0$ is bounded.
\par 
For even number $n$, the proof is similar.
The difference is the two sets of poles, say $S_1=\{z:z^n=-x_1\}$ and $S_2=\{z:z^n=-x_2\}$, where $x_1$ and $x_2$ are positive real numbers with $x_1\neq x_2$.
In addition, the assertion $e_{r} \in \partial \mathcal{A}_{1}$ can be easily deduced by the similar proof of connectivity of the Julia set when $n(>7)$ is odd.
\end{enumerate}
This concludes the proof.
\end{proof}
\begin{rem}
Although we prove that $H_p$ is convergent for $n=2,3$, we do not know whether the same is to be followed for higher $n$.
\end{rem}
\begin{rem}
It follows from the proof of Theorem \ref{m1} that the immediate basin of $ 0 $ for $  H_p $ is unbounded when $n=2,3$.
Such a result still holds for $n=4$, but it does not hold for $n\geq5$ (see \cite[Theorem 3.7]{CGJ2025}).
Thus, we provide another proof of boundedness of the immediate basin of $0$ when $n \geq 7$.
However, we do not know whether such a result can be proved by our method when $n=4,5,6$.
\end{rem}
\begin{rem}
For a polynomial $p$, unlike Newton's method $N_p$, in which each immediate basin corresponding to a root of $p$ is unbounded (see \cite[Proposition 6]{HSS2001}), Halley's method $H_p$ may have bounded immediate basins (see Figures \ref{JS-3}(a) and \ref{JS-3}(b).
Recall that Newton's method and Halley's method are K\"{o}nig's method of order $2$ and $3$, respectively.
For a given integer $n>3$, whether there exists a polynomial $p_n$ such that $K_{p_n,n}$ possesses a bounded immediate basin of root, where $K_{p_n,n}$ is K\"{o}nig's method
of order $n$ for polynomial $p_n$.
\end{rem}

\begin{figure}[h!]
	\begin{subfigure}{.6\textwidth}
		\centering
		\includegraphics[width=0.78\linewidth]{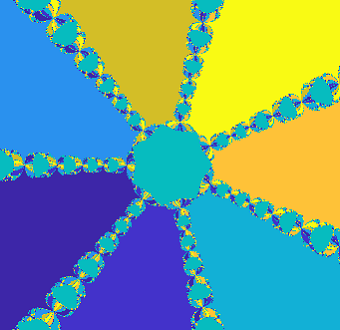}
		\caption{The Julia set  $\mathcal{J}(H_p)$ for $p(z)=z(z^7-1)$}
	\end{subfigure}
	\hspace{-2.0cm}
	\begin{subfigure}{.6\textwidth}
		\centering
		\includegraphics[width=0.78\linewidth]{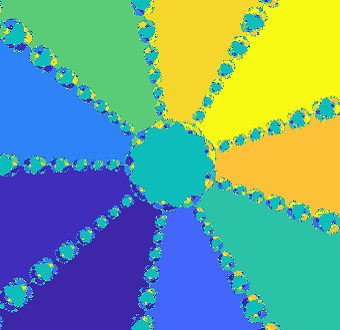}
		\caption{The Julia set  $\mathcal{J}(H_p)$ for $p(z)=z(z^9-1)$}
	\end{subfigure}
	\caption{The immediate basins corresponding to the non-zero roots of $p$ are unbounded, whereas $\mathcal{A}_0$ is bounded}
	\label{JS-3}
\end{figure}
\section{Illustrations on $H_p$ for cubic polynomials $p$}\label{Sect-5}
Theorem~\ref{m1} includes the dynamical study of Halley's method applied to cubic polynomials whose symmetry groups are non-trivial. We conclude the article by making some remarks on $H_p$ whenever $p$ is cubic and $\Sigma p$ is trivial.
\begin{lem}
If $p$ is cubic and has exactly two roots, then $H_{p}$ is convergent and $ \mathcal{J}(H_p) $ is a Jordan curve.
\end{lem}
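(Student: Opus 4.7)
The plan is to reduce to a canonical representative and carry out an explicit Fatou/Julia analysis. Every cubic polynomial with exactly two distinct roots has one double root and one simple root, so by the Scaling property (Theorem 2.2 of \cite{Nayak-Pal2022}) I may take $p(z)=z^{2}(z-1)$. A direct computation gives
\[
H_p(z)=\frac{z(3z^{2}-3z+1)}{6z^{2}-8z+3},
\]
a rational map of degree three. By Proposition~\ref{prop_f.pts} the finite fixed points are $0$ (attracting, multiplier $\tfrac{1}{3}$), $1$ (superattracting), and the extraneous point $\tfrac{2}{3}$ (repelling, multiplier $3$); the free critical points $c_{\pm}=\tfrac{2\pm i\sqrt{2}}{6}$ and the poles $\tfrac{4\pm i\sqrt{2}}{6}$ come in non-real complex conjugate pairs. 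Because $H_p$ has real coefficients, $\mathcal{J}(H_p)$, $\mathcal{A}_{0}$ and $\mathcal{A}_{1}$ are all invariant under complex conjugation.

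Since the real axis contains no free critical point and no pole of $H_p$, Lemma~\ref{gen_dyn} applied on each of the intervals $(-\infty,0)$, $(0,\tfrac{2}{3})$, $(\tfrac{2}{3},1)$ and $(1,\infty)$, combined with a sign check of $H_p(x)-x$ on each, yields $(-\infty,\tfrac{2}{3})\subset\mathcal{A}_{0}$ and $(\tfrac{2}{3},\infty)\subset\mathcal{A}_{1}$. In particular both $\mathcal{A}_{0}$ and $\mathcal{A}_{1}$ are unbounded.

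The decisive step is to place $c_{\pm}$ inside $\mathcal{A}_{0}$. A one-line computation gives $H_p(c_{\pm})=c_{\pm}/4$, so $|H_p(c_{\pm})|=\sqrt{6}/24$. Writing $H_p(z)=z\,g(z)$ with $g(z)=(3z^{2}-3z+1)/(6z^{2}-8z+3)$ and $g(0)=\tfrac{1}{3}$, a Schwarz-type bound for $|g|$ on a small closed disk $\{|z|\le r\}$ (for an $r$ safely away from the poles) produces an $r>\sqrt{6}/24$ with $|H_p(z)|<|z|$ on this disk, so the disk lies in $\mathcal{A}_{0}$ and one iterate of each $c_{\pm}$ already lies inside it. Thus $c_{\pm}\in\mathcal{A}_{0}$. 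The Riemann-Hurwitz formula then forces $\mathcal{A}_{0}$ (which contains $c_{\pm}$) and $\mathcal{A}_{1}$ (which contains the critical point $1$ of multiplicity two) to be simply connected with local degree $3=\deg H_p$, so each is completely invariant and together they account for all $2\deg H_p-2=4$ critical points.

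Two disjoint completely invariant Fatou components exhaust $\mathcal{F}(H_p)$, giving $\mathcal{F}(H_p)=\mathcal{A}_{0}\cup\mathcal{A}_{1}$ and hence convergence of $H_p$. Since every critical point is attracted to one of the two (super)attracting fixed points, $H_p$ is hyperbolic, hence geometrically finite, and $\mathcal{J}(H_p)$ is locally connected by Tan and Yin's theorem. The common boundary of two disjoint simply connected completely invariant Fatou components whose union is $\mathcal{F}(H_p)$ is then a Jordan curve: Carath\'eodory's extension theorem applied to the Riemann maps of $\mathcal{A}_{0}$ and $\mathcal{A}_{1}$ identifies $\partial\mathcal{A}_{0}=\partial\mathcal{A}_{1}=\mathcal{J}(H_p)$ as a homeomorphic image of $S^{1}$. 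I expect the main obstacle to be the quantitative control in the third step: ruling out additional attracting or parabolic cycles spawned by the non-real critical orbits is not immediate from the real-line analysis alone, and the cleanest route seems to be the explicit disk estimate described above.
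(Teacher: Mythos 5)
Your strategy runs parallel to the paper's own proof (an equivalent normalization, $z^{2}(z-1)$ instead of $z(z-1)^{2}$; real-axis dynamics for unboundedness of both immediate basins; Riemann--Hurwitz for complete invariance; two completely invariant simply connected basins giving convergence and a Jordan curve), and your computations are correct, including the pleasant identity $H_p(c_{\pm})=c_{\pm}/4$. But one step is stated too quickly: from $H_p(c_{\pm})$ lying in a small disk around $0$ contained in $\mathcal{A}_{0}$ you conclude ``thus $c_{\pm}\in\mathcal{A}_{0}$''. That only places $c_{\pm}$ in the \emph{basin} of $0$; the Fatou component containing $c_{\pm}$ could a priori be a preimage of $\mathcal{A}_{0}$ different from $\mathcal{A}_{0}$ itself, and your Riemann--Hurwitz count needs the critical points in the \emph{immediate} basin. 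The repair is short and is in essence what the paper does: $0$ has multiplier $1/3$, so by Fatou's theorem $\mathcal{A}_{0}$ must contain a critical point of $H_p$; since $1\in\mathcal{A}_{1}$, at least one of $c_{\pm}$ lies in $\mathcal{A}_{0}$, and conjugation symmetry (real coefficients, $0$ real) puts both there. Alternatively one can check $|g|<1$ on the disk of radius $|c_{\pm}|=\sqrt{6}/6$, which does hold, but your crude bound ``safely away from the poles'' does not reach that radius.

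The more serious gap is the claim that Riemann--Hurwitz ``forces'' $\mathcal{A}_{0}$ and $\mathcal{A}_{1}$ to be simply connected. The Euler-characteristic count only constrains components of finite connectivity; an invariant immediate basin containing finitely many critical points can perfectly well be infinitely connected (the basin of $\infty$ of $z^{2}+c$ with Cantor Julia set is invariant, proper of degree $2$, contains two critical points, and is infinitely connected), so the formula cannot be run backwards to deduce $\mathcal{C}=1$. This is exactly the point where the paper invests its effort: it first proves that $\mathcal{J}(H_p)$ is connected --- both immediate basins are unbounded by the real-line analysis, their boundaries contain poles by Lemma~\ref{bdry_pole}, the conjugation symmetry places all (non-real, conjugate) poles on these boundaries, and Lemma~\ref{all_pole} then gives connectivity --- from which simple connectivity of every Fatou component follows, and only \emph{then} applies Riemann--Hurwitz to get local degree $3$ and complete invariance. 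You already have the unboundedness and the symmetry in hand, so inserting this step (or, for $\mathcal{A}_{1}$, the B\"ottcher-coordinate argument for a superattracting basin containing no other critical point, together with a connectivity argument for $\mathcal{A}_{0}$) repairs the proof; after that your conclusions about convergence, geometric finiteness, local connectivity, and the Jordan curve go through as stated.
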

\begin{proof}
Without loss of generality we may consider $0$ as a simple root of $p$ whereas $1$ is the multiple root with multiplicity two. Then $p(z)=z(z-1)^{2}$ and therefore,
$$
H_{p}(z)=\frac{3 z^{3}}{6 z^{2}-4 z+1} \qquad\text {and}\qquad H_{p}^{\prime}(z)=\frac{3 z^{2}\left(6 z^{2}-8 z+3\right)}{\left(6 z^{2}-4 z+1\right)^{2}}.
$$
The free critical points of $ H_p $ are $\frac{4 \pm i \sqrt{2}}{6}$. Since these are complex conjugate, the immediate basin $\mathcal{A}_{1}$ contains both the free critical points. It is similar to prove that $\mathcal{A}_0$ and $\mathcal{A}_{1}$ are unbounded, and as a consequence, the Julia set is connected. Using Riemann-Hurwitz formula we get that both the immediate basins are completely invariant. Hence the Julia set is a Jordan curve.
\end{proof}
As of now, we have classified certain polynomials whose Halley's method is convergent. If $H_p$ is not convergent for some polynomial $p$, then there will be a region where the successive iterations of each point does not converge to a root of $p$. We call such a region as \textit{bad region}. For Chebyshev's method applied to cubic or higher degree polynomials, such bad regions occur due to the presence of non-repelling extraneous fixed points (see \cite[Theorem 1.2]{Nayak-Pal2022}). However, as all the extraneous fixed points are repelling for Halley's method, the simplest way to find a bad region for $H_p$ is to construct a $2$-periodic superattracting cycle. In the following result we propose a parameterization of cubic polynomials to construct such cycles.
\begin{prop}\label{exst_2p}
	For $d=3$, Halley's method is not generally convergent.
\end{prop}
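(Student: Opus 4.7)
The plan is to show that the set of cubic polynomials $p$ for which $H_p$ is \emph{not} convergent has non-empty interior in the space of degree-$3$ polynomials; since a generally convergent method requires convergence on a dense set, this suffices. I would exploit the fact that the presence of an attracting periodic cycle of period $\geq 2$ provides a Fatou component whose orbit avoids every root of $p$, producing a ``bad region'' that certifies non-convergence, and that attracting cycles are stable under small holomorphic perturbations of the parameter.

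First I would reduce to a one-parameter family. Using the Scaling property noted after Proposition~\ref{ext_two}, every cubic polynomial with three simple roots is affinely conjugate to $p_\lambda(z)=z(z-1)(z-\lambda)$ for some $\lambda\in\mathbb{C}\setminus\{0,1\}$, and the Halley maps of conjugate polynomials are themselves conjugate. So it suffices to find a single value $\lambda_0$ such that $H_{p_{\lambda_0}}$ admits a superattracting periodic cycle of period $2$: once such $\lambda_0$ is located, the superattracting cycle becomes merely attracting under a small perturbation of $\lambda$ (by the implicit function theorem applied to the equation $H_{p_\lambda}^2(z)-z=0$ together with the continuous dependence of the multiplier), so an open neighbourhood of $\lambda_0$ inherits a non-empty basin, disjoint from the basins of the three roots, and $H_{p_\lambda}$ is not convergent on that neighbourhood.

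Next I would set up the algebraic problem of finding $\lambda_0$. Compute $H_{p_\lambda}(z)$ from the defining formula (\ref{H_formula}); identify the free critical points as solutions $c=c(\lambda)$ of the numerator factor of $H'_{p_\lambda}$ that does not come from the roots of $p_\lambda$ (there are $\deg H_{p_\lambda}+1-2\cdot 3 = 0$ extra by the degree count if no special critical point exists — more carefully, each simple root of $p_\lambda$ is a critical point of $H_{p_\lambda}$ of multiplicity $2$, using up $6$ out of the $2\deg H_{p_\lambda}-2$ critical multiplicities, leaving free critical points). A superattracting $2$-cycle must contain a critical point of $H_{p_\lambda}$; ruling out the roots of $p_\lambda$ (which are fixed), the cycle must contain a free critical point $c(\lambda)$. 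The condition ``$c$ lies on a genuine $2$-cycle'' becomes the algebraic system
\begin{equation*}
H_{p_\lambda}^{2}\bigl(c(\lambda)\bigr)=c(\lambda),\qquad H_{p_\lambda}\bigl(c(\lambda)\bigr)\neq c(\lambda),
\end{equation*}
which I would reduce (after clearing denominators and eliminating the critical point equation) to a polynomial equation $\Phi(\lambda)=0$, and exhibit at least one solution $\lambda_0$ for which the corresponding cycle is not a fixed point and does not consist of roots of~$p_{\lambda_0}$. A concrete real-parameter search along, say, $\lambda\in\mathbb{R}$ already produces such a $\lambda_0$; alternatively one can argue via the intermediate value theorem by tracking the sign of the multiplier $\mu(\lambda)=H_{p_\lambda}'(c(\lambda))\,H_{p_\lambda}'(H_{p_\lambda}(c(\lambda)))$ of the candidate cycle along a real arc in parameter space.

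The main obstacle is the algebraic bookkeeping: $H_{p_\lambda}$ is a degree-$5$ rational map, so the resultant equation $\Phi(\lambda)$ is of very high degree and needs to be pruned of spurious roots (namely the $\lambda$ for which $c(\lambda)$ coincides with a root of $p_\lambda$, or for which $H_{p_\lambda}(c(\lambda))=c(\lambda)$ making the ``$2$-cycle'' degenerate to a fixed point). I would separate these degenerate loci as explicit algebraic subvarieties of the $\lambda$-plane and verify that the polynomial $\Phi$ has a root outside them, either by producing an explicit $\lambda_0$ numerically and certifying it, or by a degree count showing $\Phi$ cannot vanish identically on the degenerate locus. Once $\lambda_0$ is in hand, persistence of attracting cycles closes the argument and yields the proposition.
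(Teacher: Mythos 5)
Your overall strategy is the same as the paper's: locate, inside a one-parameter family of cubics, a parameter for which a free critical point of the Halley map lies on a superattracting $2$-cycle, and conclude non-convergence. You additionally spell out the perturbation step (an attracting cycle persists under small perturbation, so a whole open set of cubics is bad and no dense set of convergent ones can exist), which the paper leaves implicit but which is genuinely needed under its definition of general convergence; that part of your write-up is a point in your favor.

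The gap is that the decisive step is never carried out: you reduce everything to ``exhibit a root $\lambda_0$ of $\Phi(\lambda)$ avoiding the degenerate loci,'' and then only assert that a numerical search or an unspecified degree count would produce one. As it stands, nothing in the proposal rules out the a priori possibility that every root of $\Phi$ is spurious (i.e., $c(\lambda)$ collides with a root of $p_\lambda$, or the ``$2$-cycle'' degenerates to a fixed point), so the existence claim on which the proposition rests is unproven. Your choice of normal form $z(z-1)(z-\lambda)$ also makes this harder than it needs to be, since the free critical points depend on $\lambda$ and you are forced into resultants. The paper avoids all of this by normalizing to $p_b(z)=z^3+6z+b$: then $H_b'$ carries the factor $(z^2-1)$, so the free critical points are $\pm 1$ for every $b$, and the cycle condition $H_b^2(1)=1$ is a single explicit degree-$6$ polynomial equation in $b$ that factors as $(b+7)F(b)=0$ with $F$ quintic; the only degenerate value is $b=-7$ (where $1$ becomes a root of $p_b$, and this also covers the case $H_b(1)=1$, since a superattracting fixed point of $H_b$ must be a root of $p_b$ by Proposition~\ref{prop_f.pts}), so the roots of $F$ give genuine superattracting $2$-cycles $\{1,\xi\}$. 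To complete your argument you would either have to carry out and certify the elimination in $\lambda$, or switch to a parameterization of this kind in which the critical points are parameter-independent. A further small quibble: the suggested intermediate-value argument ``tracking the sign of the multiplier'' is muddled, because the multiplier of the sought cycle is $0$ (the critical point lies on it); the natural quantity to control is $H_{p_\lambda}^{2}(c(\lambda))-c(\lambda)$, or simply the roots of $\Phi$ via the fundamental theorem of algebra.
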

\begin{proof}
	As of now we have seen that $H_{p}$ is convergent whenever $p$ has exactly two roots of $\Sigma p$ is non-trivial. Now we consider $p$ is generic and $\Sigma p$ is trivial. Such types of polynomials can be parameterized as $p_b(z)=z^3+6z+b,$
where $b \in \mathbb{C} \backslash\{ \pm 4 i \sqrt{2}, 0\}$.
\par We denote Halley's method applied to $p_b$ by $H_b$.
Then
$$
H_{b}(z)=\frac{z^{5}-2 z^{3}-2 b z^{2}-2 b}{2 z^{4}+6 z^{2}-b z+12} \qquad \text {and} \qquad
H_{b}^{\prime}(z)=\frac{2\left(z^{3}+6 z+b\right)^{2}\left(z^{2}-1\right)}{\left(2 z^{4}+6 z^{2}-b z+12\right)^{2}} .
$$
Therefore, free critical points of $H_{b}$ are $\pm 1$. To prove $ H_b $ is not convergent, our aim is to construct a $ 2 $-periodic superattracting cycle of $ H_b $.

Note that $H_{b}(1)=\frac{1+4 b}{b-20}=\xi$ (say).
Therefore, if $H_{b}^{2}(1)=1$ then we get the equation
$$10 b^{6}-687 b^{5}+4326 b^{4}-24766 b^{3}-2569622b^{2}+885354 b-12815747=0,$$ and this gives
$(b+7) F(b)=0$,
where $$F(b)=10 b^{5}-757 b^{4}+9625 b^{3}-92141 b^{2}+388025 b-1830821.$$
Note that $b \neq-7$, unless the free critical point $ 1 $ becomes a root of $p$.
Thus, there are five values of $b$ for which $ H_b $ has a $2$-periodic superattracting cycle
$\{1, \xi\}$. Therefore, Halley's method is not generally convergent for cubic polynomials.
\end{proof}
Figure~\ref{JS-4}(a) shows the existence of superattracting $2$-periodic cycle for Halley's method applied $z^3+6z+62.5144396$. The enlarged image of one of the bad region is shown in Figure~\ref{JS-4}(b).
\begin{lem}
There are essentially five cubic polynomials for which the respective Halley's method has a superattracting $2$-periodic cycle.
\end{lem}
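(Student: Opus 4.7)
The plan is to bootstrap directly off Proposition~\ref{exst_2p} by reducing every cubic to the normal form $p_b(z) = z^3 + 6z + b$ and identifying how the equivalence classes correspond to solutions of the polynomial equation $F(b)=0$ that already appeared there.

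First, I would argue that the only relevant cubics are the generic ones. If $p$ has at most two distinct roots, the preceding lemma shows $H_p$ is convergent, so its Fatou set decomposes into basins of the roots and there is no room for a superattracting 2-periodic cycle. If $p$ is cubic with non-trivial symmetry group, Theorem~\ref{m1}(2) yields the same conclusion. Thus a superattracting 2-cycle can only arise when $p$ is a generic cubic (three simple roots, $\Sigma p$ trivial). By the Scaling property, every such $p$ is affinely conjugate to some $p_b(z) = z^3+6z+b$ with $b\notin\{0,\pm 4i\sqrt 2\}$, and the conjugation $T(z)=-z$ shows $p_b$ and $p_{-b}$ give the same Halley's dynamics up to conjugacy, so the natural parameter space is $\{b\}/(b\sim -b)$.

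Next, I would use the characterization that a superattracting 2-cycle of any rational map must contain a critical point, since $(H_b^2)'(z)=H_b'(z)\,H_b'(H_b(z))=0$ forces $z$ or $H_b(z)$ to be critical. The roots of $p_b$ are already (non-super-) attracting fixed points of $H_b$, so the cycle must involve a \emph{free} critical point, and the computation in Proposition~\ref{exst_2p} identifies these as $\pm 1$. Therefore the superattracting 2-cycles are precisely those containing $1$ or containing $-1$.

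Then I would combine the count: the condition ``$1$ is 2-periodic, not fixed'' yields the five roots of $F(b)$ from Proposition~\ref{exst_2p}. The symmetry $p_b(-z)=-p_{-b}(z)$ conjugates $H_{p_b}$ to $H_{p_{-b}}$ via $T(z)=-z$, so ``$-1$ is 2-periodic for $H_{p_b}$'' is equivalent to ``$1$ is 2-periodic for $H_{p_{-b}}$'', giving the parameters $\{-b_1,\dots,-b_5\}$. Since $p_b\sim p_{-b}$ in the affine-equivalence quotient, these two sets collapse to the same collection of equivalence classes, producing five cubic polynomials up to the natural identification.

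The main obstacle is verifying that no further collapse occurs: a priori, two of the five roots $b_i, b_j$ of $F$ might satisfy $b_i=-b_j$, which would merge two classes. To rule this out I would examine the resultant of $F(b)$ and $F(-b)$ (or equivalently check that $\gcd(F(b),F(-b))$ is constant), which is a finite rational computation; once this is confirmed the ten parameters $\{\pm b_i\}$ split cleanly into five $\pm$-pairs and the ``essentially five'' count follows.
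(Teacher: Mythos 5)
Your proposal is correct and takes essentially the same route as the paper: reduce to the family $p_b(z)=z^3+6z+b$, note that a superattracting $2$-cycle must pass through a free critical point $\pm 1$, use $H_b^2(1)=1 \Leftrightarrow (b+7)F(b)=0$ together with the conjugacy $H_b(-z)=-H_{-b}(z)$ to identify the parameters with a cycle through $-1$ as the negatives of the five roots of $F$, and count classes under $b\sim -b$. Two minor remarks: simple roots of $p_b$ are in fact \emph{superattracting} fixed points of $H_b$ (multiplier $0$ by Proposition~\ref{prop_f.pts}), though your conclusion that the cycle must contain $\pm 1$ is unaffected since roots are fixed; and the no-collapse verification you flag (no root of $F$ equals the negative of another, and the roots of $F$ are simple) is left implicit in the paper as well, so making it explicit via the resultant $\operatorname{Res}(F(b),F(-b))\neq 0$ would only strengthen the argument.
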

\begin{proof}
Note that $p_b(-z)=-p_{-b}(z)$ for any $b\in \mathbb{C}$. Thus, using the Scaling property, we get that $H_b$ and $H_{-b}$ are conjugate, more precisely, $H_b(-z)=-H_{-b}(z)$.
If we consider $H_{b}^{2}(-1)=-1$ then we get that $(b-7) F(-b)=0$. Here also $b \neq 7$, and as a consequence, we get another five values of $b$ for which $ H_b $ has a $ 2 $-periodic superattracting cycle $\{-1, \tilde{\xi} \}$, where $\tilde{\xi}=\frac{1-4 b}{b+20}$. However, if $H_b^2(1)=1$ for some $b$ then $H_{-b}^2(-1)=-1$.
Thus, there are essentially five distinct values of $b$ for which $H_{b}$ has a $2$-periodic superattracting cycle.
\end{proof}
\begin{rem}
Let $b$ be real. Then $\mathcal{J}(H_b)$ is symmetric under the reflection about the real axis. As $p_b^{\prime}(z)=3\left(z^{2}+2\right)$, extraneous fixed points of $H_b$ are $\pm i \sqrt{2}$.
Since $H_{b}(\bar{z})=\overline{H_{b}(z)}$, if there is an
unbounded simply connected Fatou component $V$ such that one of the extraneous fixed points is on the boundary of $V$ then there will be an unbounded Julia component containing all the repelling fixed points. Hence, by Lemma~\ref{connected_J1}, $\mathcal{J}(H_{p})$ is connected.
\end{rem}
As for every polynomial $p$, all the fixed point of $H_p$ are either attracting or repelling, it is trivial that $H_p$ cannot have an invariant Siegel disk. The next result discusses about the non-existence of a rotation domain for $H_b$ whenever $b$ is real.
\begin{prop}\label{no_rot_dom}
There is no rotation domain for $H_b$ whenever $b$ is real.
\end{prop}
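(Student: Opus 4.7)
The plan is to exploit the real coefficients of $H_b$ to pin the post-critical set to $\mathbb{R} \cup \{\infty\}$, and then use the classical containment of rotation-domain boundaries in the closure of the post-critical set. First, I would identify the critical set $\mathrm{Crit}(H_b)$: the three roots of $p_b$ (each a superattracting fixed point at which $H_b$ has local degree $3$, hence a critical point of multiplicity at least $2$) together with the two free critical points $\pm 1$ already computed in the proof of Proposition~\ref{exst_2p}. The roots of $p_b$ are fixed, so each contributes only itself to the post-critical set, and those points lie in the corresponding attracting basins. Because $b \in \mathbb{R}$, all coefficients of $H_b$ are real, so $H_b$ preserves $\mathbb{R} \cup \{\infty\}$; in particular the forward orbits of $\pm 1$ remain inside $\mathbb{R} \cup \{\infty\}$. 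Taking closures and intersecting with the Julia set yields
\[
\overline{\mathrm{PC}(H_b)} \cap \mathcal{J}(H_b) \;\subseteq\; \mathbb{R} \cup \{\infty\},
\]
where $\mathrm{PC}(H_b) := \bigcup_{n \geq 1} H_b^n(\mathrm{Crit}(H_b))$.

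Next I would dispose of invariant Siegel disks cheaply: by Proposition~\ref{prop_f.pts} every fixed point of $H_b$ is attracting or repelling, so none is irrationally indifferent. For Siegel-disk cycles of period at least $2$ and for Herman-ring cycles of any period, I would invoke the classical fact (Milnor, Theorem~11.17 of \cite{Milnor_book}, together with its standard analog for Herman rings) that the union of the boundaries of such a cycle is contained in $\overline{\mathrm{PC}(H_b)}$; combined with the display above, the boundary of every such component $U$ is pinned inside $\mathbb{R} \cup \{\infty\}$.

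The topological punchline is to deduce that no connected proper open subset $U \subseteq \widehat{\mathbb{C}}$ with $\partial U \subseteq \mathbb{R} \cup \{\infty\}$ can be a rotation domain. I would show that $U \cap \mathbb{H}^+$ is both open and closed in $\mathbb{H}^+$, where $\mathbb{H}^\pm$ denote the two open half-planes cut out by $\mathbb{R} \cup \{\infty\}$: openness is immediate, while closedness follows since any limit point in $\mathbb{H}^+$ of $U \cap \mathbb{H}^+$ lies in $\overline{U}$ but not in $\partial U$, hence in $U$. The same argument applies to $\mathbb{H}^-$. Connectedness of $\mathbb{H}^\pm$ and of $U$, together with $U \neq \widehat{\mathbb{C}}$, then forces $U = \mathbb{H}^+$ or $U = \mathbb{H}^-$. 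A half-plane is simply connected, so no Herman ring can occur. If instead $U = \mathbb{H}^\pm$ were a Siegel disk of period $k$, the linearisation $\phi : U \to \mathbb{D}$ would be a conformal isomorphism; since every conformal map between a half-plane and the unit disk is Möbius, the identity $H_b^k|_U = \phi^{-1} \circ (z \mapsto e^{2\pi i \theta} z) \circ \phi$ would exhibit $H_b^k|_U$ as a Möbius transformation, and the identity theorem for rational functions would promote this to $H_b^k$ being Möbius on all of $\widehat{\mathbb{C}}$, contradicting $\deg(H_b^k) \geq 2$. The most delicate step, I expect, will be citing or sketching the Herman-ring analog of Theorem~11.17 in \cite{Milnor_book}; once that is in hand, the topological half-plane argument closes the proof cleanly.
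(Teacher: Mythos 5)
Your overall strategy is essentially the paper's: for real $b$ the free critical points $\pm 1$ have real forward orbits, the roots of $p_b$ are fixed points lying in the Fatou set, hence $\overline{\mathrm{PC}(H_b)}\cap\mathcal{J}(H_b)\subseteq\mathbb{R}\cup\{\infty\}$, and the boundary of any rotation domain is pinned to $\mathbb{R}\cup\{\infty\}$ by Milnor's Theorem~11.17 (which the paper invokes for exactly this purpose and which applies to Herman rings as well as Siegel disks, so the ``delicate analog'' you worry about is not an extra step). The paper then concludes via the closed-curve argument of Theorem~\ref{extn_gen}(2), using that the real root puts part of the real line in the Fatou set; your endgame differs only in detail (a clopen argument in the half-planes plus a M\"obius/degree contradiction), and is in places written out more carefully than the paper's.

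There is, however, one step that is false as stated: a connected proper open set $U\subseteq\widehat{\mathbb{C}}$ with $\partial U\subseteq\mathbb{R}\cup\{\infty\}$ need not be a half-plane. For instance $U=\widehat{\mathbb{C}}\setminus[0,1]$ is open, connected, proper, and has boundary on the real line. Your clopen argument only gives $U\cap\mathbb{H}^{\pm}\in\{\emptyset,\mathbb{H}^{\pm}\}$, which leaves open the possibility $U\supseteq\mathbb{H}^{+}\cup\mathbb{H}^{-}$, i.e.\ $U=\widehat{\mathbb{C}}\setminus K$ with $\emptyset\neq K\subseteq\mathbb{R}\cup\{\infty\}$ closed; connectedness of $U$ and $U\neq\widehat{\mathbb{C}}$ do not exclude this. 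In context the repair is one sentence: since $p_b'(z)=3(z^2+2)>0$ on $\mathbb{R}$, the polynomial $p_b$ has exactly one real root and two non-real conjugate roots; the latter are attracting fixed points lying in Fatou components disjoint from the rotation domain $U$, so $\widehat{\mathbb{C}}\setminus U$ contains non-real points and $U$ cannot contain both open half-planes. With that added, your argument closes (and once $U=\mathbb{H}^{\pm}$ you could even finish faster: then $\partial U=\mathbb{R}\cup\{\infty\}\subseteq\mathcal{J}(H_b)$, contradicting that the real root is an attracting fixed point on $\mathbb{R}$, so the M\"obius step is optional). Note also that the statement includes $b=0$, which the paper treats separately via Theorem~\ref{m1}; your unified argument covers it directly, since $p_0(z)=z(z^2+6)$ still has one real and two non-real roots.
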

\begin{proof}
For $b=0$ the Julia set of $H_b$ is connected (proved in Theorem~\ref{m1}).  Therefore, we consider $p(z)=z^3+6z+b$, where $b \in \mathbb{R} \backslash\{0\}$. Then both the free critical points of $H_p$ are real. As the real axis is preserved under the iterations of $H_b$, the forward orbits of these free critical points are in the real axis. The polynomial $p_b$ has a real root, thus the entire real line is not in the Julia set. Then the non-existence of a rotation domain follows from \cite[Theorem 11.17]{Milnor_book}, and the argument used in the proof of Theorem~\ref{extn_gen}(2).
\end{proof}
\begin{figure}[h!]
	\begin{subfigure}{.6\textwidth}
		\centering
		\includegraphics[width=0.78\linewidth]{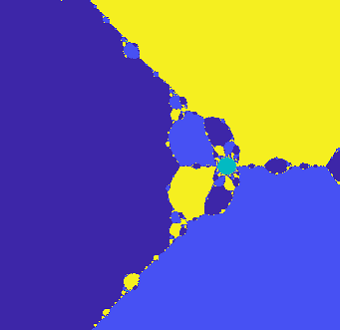}
		\caption{$\mathcal{J}(H_b)$ has bad regions for $b=62.5144396$}
	\end{subfigure}
	\hspace{-2.0cm}
	\begin{subfigure}{.6\textwidth}
		\centering
		\includegraphics[width=0.78\linewidth]{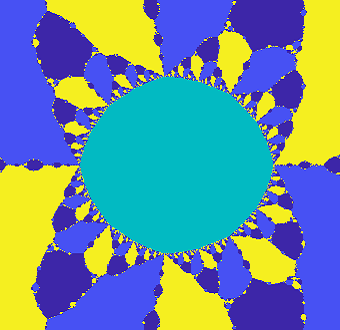}
		\caption{The enlarged bad region}
	\end{subfigure}
	\caption{$H_b$ has a superattracting $2$-periodic cycle $\{1,5.905235\}$, its basins constitute bad regions}
	\label{JS-4}
\end{figure}
\textbf{Acknowledgement:} Soumen Pal is supported by Indian Institute of Technology Madras through a Postdoctoral Fellowship.

\end{document}